\documentclass[11pt,reqno]{amsart}
\usepackage{amsmath}
\usepackage{amsfonts}
\usepackage{amsthm}
\usepackage{amssymb, setspace}
\usepackage{mathtools}
\usepackage{lipsum}
\usepackage{cite}
\usepackage{color,graphicx,epsfig,geometry,fancyhdr,hyperref}
\usepackage[T1]{fontenc}
\usepackage{float}
\usepackage{subfig}
\usepackage{commath}
\usepackage{bbm}
\usepackage{mathrsfs,fleqn}
\usepackage{pgfplots}
\pgfplotsset{compat=newest}

\setlength{\topmargin}{-1.5cm}
\setlength{\oddsidemargin}{0.0cm}
\setlength{\evensidemargin}{0.0cm}
\setlength{\textwidth}{16.7cm}
\setlength{\textheight}{23cm}
\headheight 20pt
\headsep    26pt
\footskip 0.4in

\setlength{\itemsep}{0pt}
\setlength{\parsep}{0pt}
\setlength{\parskip}{2pt}

\newtheorem{theorem}{Theorem}[section]

\newtheorem{lemma}{Lemma}[section]

\newcommand{\N}{\mathbb{N}}
\newcommand{\Z}{\mathbb{Z}}

\newcommand{\R}{\mathbb{R}}
\newcommand{\C}{\mathbb{C}}

\newcommand{\dnu}{\partial_\nu}
\newcommand{\dnux}{\partial_{\nu_x}}
\newcommand{\dnuy}{\partial_{\nu_y}}
\newcommand{\pdr}{\partial_r}
\newcommand{\dd}{\mathrm{d}}

\newcommand{\ov}{\overline}
\newcommand{\brac}[1]{\left\{#1\right\}}
\newcommand{\paren}[1]{\left( #1 \right) }
\newcommand{\exD}{\mathbb{R}^2 \setminus \overline{D}}

\usepackage{comment}
\usepackage{ulem}
\graphicspath{{paper-pics/}}

\begin{document}

\title[Direct sampling for recovering a clamped cavity]{Direct sampling for recovering a clamped cavity from the biharmonic far-field data}

\author{Isaac Harris}
\address{Department of Mathematics, Purdue University, West Lafayette, IN 47907, USA}
\email{harri814@purdue.edu}

\author{Heejin Lee}
\address{Department of Mathematics, Purdue University, West Lafayette, IN 47907, USA}
\email{lee4485@purdue.edu}

\author{Peijun Li}
\address{LSEC, ICMSEC, Academy of Mathematics and Systems Science, Chinese Academy of Sciences, Beijing 100190, China, and School of Mathematical Sciences, University of Chinese Academy of Sciences, Beijing 100049, China}
\email{lipeijun@lsec.cc.ac.cn}  

\thanks{The research of I. Harris and H. Lee is partially supported by the NSF DMS Grants 2107891 and 2208256.}

\subjclass[2010]{35R30, 78A46, 74K20}

\keywords{direct sampling method, biharmonic waves, qualitative methods}

\begin{abstract}
This paper concerns the inverse shape problem of recovering an unknown clamped cavity embedded in a thin infinite plate. The model problem is assumed to be governed by the two-dimensional biharmonic wave equation in the frequency domain. Based on the far-field data, a resolution analysis is conducted for cavity recovery via the direct sampling method. The Funk--Hecke integral identity is employed to analyze the performance of two imaging functions. Our analysis demonstrates that the same imaging functions commonly used for acoustic inverse shape problems are applicable to the biharmonic wave context. This work presents the first extension of direct sampling methods to biharmonic waves using far-field data. Numerical examples are provided to illustrate the effectiveness of these imaging functions in recovering a clamped cavity.
\end{abstract}

\maketitle

\section{Introduction}

In this paper, we address an inverse shape problem arising from the study of wave propagation in a thin infinite elastic plate. For such a plate, the displacement is governed by the Kirchhoff--Love model, which corresponds to the biharmonic scattering problem in the frequency domain for the total displacement. We assume that the far-field data are measured and propose a numerical method to recover the unknown clamped cavity. The clamped boundary conditions imply that the total displacement has zero Cauchy data on the boundary of the cavity. This problem is analogous to the sound soft inverse shape problem in acoustic scattering, as discussed in \cite{Cakoni-Colton-book, DSMnf, Liu}. Such inverse problems have applications in non-destructive testing, where data collected from the exterior of an object are used to reconstruct its interior structure. These problems are relevant to various fields, including engineering, medical imaging, geophysics, and remote sensing. A similar inverse shape problem for biharmonic scattering has been explored in \cite{bih-opt}, using near-field data and an optimization strategy.

We develop a qualitative (i.e., non-iterative) method for recovering the cavity. For an in-depth discussion on qualitative methods applied to various inverse scattering problems, see \cite{Cakoni-Colton-book, ColtonKress, kirschbook}. Although extensive literature exists on applying qualitative methods to inverse acoustic, electromagnetic, and elastic problems, there is less theoretical and computational development specifically for biharmonic scattering. {\color{black} This makes the investigation mathematically interesting and challenging. This is due to the fact that well--known identities and theorems that are vital to the investigation may no longer be valid. Also, as we will see, by using the far-field data one inherently loss information about the solution outside the scatterer which is not the case for acoustic scattering.} Qualitative methods can be advantageous compared to nonlinear optimization approaches, which often require a prior information that may not be easily accessible, such as the number of regions to be identified, and can be computationally expensive. Therefore, we propose using the direct sampling method, which is computationally simple to implement and numerically stable. The proposed imaging functions involve computing the inner product or norm of the known data operator with a specific test function to obtain an analytic formula that can be exploited. However, it is worth noting that most qualitative methods rely on full aperture data for their theoretical justification.

There has been recent interest in studying inverse problems for biharmonic waves. In \cite{bih-invparam1,bih-invparam2}, the authors investigate the inverse parameter problem for the biharmonic operator in both bounded and unbounded domains. Additionally, \cite{bih-timedomain} addresses the biharmonic wave equation in the time domain and demonstrates that it is possible to uniquely determine the density and sources from the measured displacement and its Laplacian on the boundary. Other relevant works include \cite{bih-invsource,bih-invpotential}, where the authors examine the inverse random source and potential problems for the biharmonic wave equation in the frequency domain. This paper contributes to the limited literature on numerical methods for shape reconstruction involving biharmonic waves. {\color{black}Indeed, there are so few papers that provide the analytical framework for extending qualitative methods for biharmonic scattering problems. Also, the results obtained here are interesting and surprising that the same imaging functions for acoustic scattering works without alteration for biharmonic scattering. In many mathematical investigations, adding a single lower order term can dramatically alter the analysis as well as the results. The fact that at the moment this manuscript and \cite{LSM-BHclamped} are the only investigations for extending qualitative methods makes the results invaluable to researches in this field.}

The paper is outlined as follows. Section \ref{problem} presents the problem formulations, introducing both the direct and inverse problems. Section \ref{dsm} derives a factorization of the far-field operator, which is utilized in the resolution analysis of our imaging functions. Numerical examples demonstrating the applicability of the direct sampling method for cavity recovery are presented in Section \ref{numerics}. Finally, Section \ref{end} offers a brief discussion and conclusion of our results.

\section{Formulation of the Problem}\label{problem}

We consider the Kirchhoff--Love infinite plate problem involving a clamped cavity, where the cavity is represented  by a bounded region $D \subset \R^2$ with an analytic boundary $\partial D$. The cavity is exposed to a time-harmonic incident plane wave, denoted as $u^{\text{inc}}(x) = \text{e}^{\text{i}kx\cdot d}$, where $d$ is the incident direction on the unit circle $\mathbb{S}^1$. The total displacement field $u$ consists of the scattered field $u^{\text{scat}}$ and the incident field $u^{\text{inc}}$, i.e., $u \coloneqq u^{\text{scat}} + u^{\text{inc}}$. Moreover, the total field  satisfies the biharmonic scattering problem in $\mathbb{R}^2 \setminus \overline{D}$ for a fixed wave number $k>0$, described  by 
\begin{align}\label{biharmonic}
\Delta^2 u - k^4 u = 0 \quad  \text{in } \mathbb{R}^2\setminus \overline{D}, 
\end{align}
with the boundary conditions 
\begin{align}\label{cbc}
  u \big|_{\partial D}=0  \quad \text{ and } \quad \dnu{u} \big|_{\partial D}=0, 
\end{align}
where $\nu$ denotes the outward unit normal vector on the boundary $\partial D$. 

Since the incident field $u^{\text{inc}}$ satisfies 
$$ \Delta^2 u^{\text{inc}} - k^4 u^{\text{inc}} = 0 \quad  \text{in } \mathbb{R}^2,$$
the scattered field satisfies
$$ \Delta^2 u^{\text{scat}} - k^4 u^{\text{scat}} = 0  \quad  \text{in } \mathbb{R}^2\setminus \overline{D}.$$ 
To complete the system, we require that the scattered field $u^{\text{scat}}$ satisfies the Sommerfeld radiation condition at infinity (cf. \cite{poh-invsource}): 
\begin{align}\label{SRC}
\lim_{r \to \infty} \sqrt{r}(\pdr{u^{\text{scat}}} - \text{i} k u^{\text{scat}}) = 0,  \quad r=|x|,
\end{align}
which holds uniformly in $\hat{x}=x/|x|$.

It has been proven that the scattering problem given by \eqref{biharmonic}--\eqref{SRC} is well-posed. For a detailed analysis, we refer to  \cite{DongLi24} for the boundary integral equation approach and \cite{bih-var} for the variational approach. It has also been shown in \cite{bih-var,DongLi24} that the scattered field $u^{\text{scat}}$ can be expressed in terms of two auxiliary functions, $v_H$ and $v_M$, defined as
\begin{align}\label{vhvm}
v_H = -\frac{1}{2k^2}\big(\Delta u^{\text{scat}} - k^2 u^{\text{scat}}\big), \quad v_M = \frac{1}{2k^2}\big(\Delta u^{\text{scat}} + k^2 u^{\text{scat}}\big)
\end{align}
which implies that $u^{\text{scat}} = v_H + v_M$. 

It can be verified that $v_H$ satisfies the Helmholtz equation in $\exD$, while $v_M$ satisfies the modified Helmholtz equation in $\exD$. Therefore, the biharmonic wave scattering problem \eqref{biharmonic}--\eqref{SRC} is equivalent to the following problem:
\begin{align}
\Delta v_H + k^2 v_H = 0, \quad   \Delta v_M - k^2 v_M = 0 \quad &\text{in } \exD, \label{vhvmeq1} \\
v_H+v_M = -u^{\text{inc}}, \quad  \dnu(v_H+v_M) = -\dnu u^{\text{inc}} \quad &\text{on } \partial D,  \label{vhvmeq2}\\
\lim_{r \to \infty} \sqrt{r}(\pdr{v_H} - \text{i}kv_H) = 0, \quad  \lim_{r \to \infty} \sqrt{r}(\pdr{v_M} - \text{i}kv_M) = 0. &\label{vhvmeq3}
\end{align}
In our analysis, it is useful to note that the Sommerfeld radiation condition given in \eqref{vhvmeq3} can also be expressed as
\begin{align}\label{SRC1}
\pdr v_H - \text{i}k v_H = \mathcal{O}\big(r^{-3/2}\big), \quad  \pdr v_M - \text{i}k v_M = \mathcal{O}\big(r^{-3/2}\big),\quad r \to \infty. 
\end{align}

According to the analysis in \cite{DongLi24}, both $v_H$ and $v_M$ belong to $H^1_{loc} (\mathbb{R}^2\setminus \overline{D})$. It follows from the separation of variables that $v_H$ and $v_M$ admit the Fourier series expansions in 
$\mathbb{R}^2\setminus \overline{B_R}$:  
\begin{align*}
v_H (r,\theta) = \sum\limits_{|n|=0}^\infty  \frac{H^{(1)}_n(kr)}{H^{(1)}_n(kR)}  v^{(n)}_H \text{e}^{\text{i} n \theta},\quad  v_M (r,\theta) = \sum\limits_{|n|=0}^\infty   \frac{H^{(1)}_n(\text{i} kr)}{H^{(1)}_n(\text{i}kR)}  v^{(n)}_M \text{e}^{\text{i} n \theta}, 
\end{align*}
where $H^{(1)}_n$ denote the Hankel function of first kind of order $n\in \Z$, $v^{(n)}_H$ and $v^{(n)}_M$ are the Fourier coefficients of $v_H (R,\theta)$ and $v_M (R,\theta)$, respectively. Given this fact, the solution $v_M$ to the modified Helmholtz equation decays exponentially. Specifically, for all $n \in \Z$ , we have
$$| H^{(1)}_n(\text{i} kr)| = \mathcal{O}\big(r^{-1/2} \text{e}^{-kr}\big), \quad r \to \infty.$$
Additionally, using the recursive relationship for all $n \in \Z$, 
$$\frac{\text{d}}{\text{d}t} H^{(1)}_n (t) = \frac{1}{2} \big( H^{(1)}_{n-1} (t) + H^{(1)}_{n+1} (t)\big),$$
it follows that $\partial_r v_M$ decays exponentially as $r \to \infty$ (cf. \cite{bessel-webpage}). 

By Green's representation theorem, for $x\in \exD$, we have 
\begin{align}
\label{GrH}v_H(x) &= \int_{\partial D} \left(v_H(y) \dnuy \Phi_k(x,y) - \dnuy v_H(y) \Phi_k(x,y)\right) \dd s(y),  \\
\label{GrM}v_M(x) &= \int_{\partial D} \left( v_M(y) \dnuy \Phi_{\text{i}k}(x,y) - \dnuy v_M(y) \Phi_{\text{i}k}(x,y)\right) \dd s(y), 
\end{align}
where $\Phi_k(\cdot, \cdot)$ and $\Phi_{\text{i}k}(\cdot, \cdot)$ are the fundamental solutions of the Helmholtz equation and the modified Helmholtz equation in $\R^2$, respectively. Explicitly, the fundamental solution is given as
$\Phi_k(x,y) = \frac{\text{i}}{4} H^{(1)}_0(k|x-y|).$

Since $v_M$ decays exponentially and $v_H$ is a radiating solution to the Helmholtz equation, the scattered field $u^{\text{scat}} $ exhibits the asymptotic behavior
\begin{align}\label{asymp}
u^{\text{scat}} (x, d) = \frac{\text{e}^{\text{i}\pi/4}}{\sqrt{8\pi k}}\frac{\text{e}^{\text{i}k|x|}}{\sqrt{|x|}}\left( u^\infty(\hat{x},d) + \mathcal{O}(|x|^{-1})\right), \quad  |x| \to \infty.
\end{align}
Here, $u^\infty(\hat{x}, d)$ denotes the far-field pattern of $u^{\text{scat}} $. Combining \eqref{GrH} and \eqref{asymp} yields $u^\infty(\hat{x},d) = v^\infty_H (\hat{x},d)$ and 
\begin{align}\label{ffp1}
u^\infty(\hat{x},d) = \int_{\partial B_R}\Big( v_H(y) \dnuy \text{e}^{-\text{i}k\hat{x}\cdot y} -\dnuy v_H(y) \text{e}^{-\text{i}k\hat{x}\cdot y}\Big)\dd s(y),
\end{align}
where $\text{e}^{-\text{i}k\hat{x}\cdot y}$ represents the far-field pattern of the fundamental solution $\Phi_k (x,y)$ to the Helmholtz equation. 

Notice that the far-field data does not contain any information about the exponentially decaying component $v_M$  of the scattered field. Although some information is lost in the far field,  $u^\infty$ still uniquely determines the cavity $D$, as proven in \cite{DongLi23}. We define the far-field operator $F: L^2(\mathbb{S}^1) \to L^2(\mathbb{S}^1) $ as follows:
\begin{align}\label{ffop}
(Fg)(\hat{x}) = \int_{\mathbb{S}^1} u^\infty(\hat{x}, d) g(d) \, \dd s(d).
\end{align}

The goal is to develop a direct sampling method using the far-field operator to numerically recover the cavity $D$. To achieve this, the following section focuses on the necessary analysis to derive the proposed reconstruction method.

\section{Direct Sampling Method}\label{dsm}
In this section, we address the inverse shape problem of recovering the cavity $D$ from known biharmonic far-field data $u^\infty$ for all $\hat{x}$ and $d$ in $\mathbb{S}^1$. We aim to extend the applicability of direct sampling methods to biharmonic scattering data, marking the first exploration in this direction, to the best of our knowledge. Direct sampling methods have been studied for the analogous acoustic scattering problem, using far-field data in \cite{Liu} and near-field data in \cite{DSMnf}. For acoustic scattering problems (i.e., the Helmholtz equation), these methods are closely related to the factorization of the far-field operator; see \cite{kirschbook} for details on this factorization. We introduce the framework for the direct sampling method and define imaging functions for reconstructing $D$, which will be applied in subsequent numerical experiments.

As previously mentioned, the direct sampling method relies on the factorization of the far-field operator. We now proceed to derive a factorization of the far-field operator using boundary integral operators. Following the approach in \cite{DongLi24}, we assume that $v_H$ and $v_M$ can be represented as the double and single layer potentials with densities $\varphi_1$ and $\varphi_2$, respectively given as follows:
\begin{align*}
v_H (x)= \int_{\partial D} \dnuy \Phi_k(x,y) \varphi_1(y) \, \text{d}s(y), \quad v_M (x) =  \int_{\partial D} \Phi_{\text{i}k}(x,y) \varphi_2(y) \, \text{d}s(y). 
\end{align*}
Therefore, the scattered field is give by 
\begin{align}\label{scat-ansatz}
u^{\text{scat}}  (x) = \int_{\partial D} \dnuy \Phi_k(x,y) \varphi_1(y) \, \text{d}s(y) + \int_{\partial D} \Phi_{\text{i}k}(x,y) \varphi_2(y) \, \text{d}s(y), 
\end{align}
which satisfies the biharmonic wave equation 
$$ \Delta^2 u^{\text{scat}}  - k^4 u^{\text{scat}}  = 0  \quad  \text{in } \mathbb{R}^2\setminus \overline{D},$$ 
along with the radiation conditions \eqref{SRC}. 

The densities $\varphi_1$ and $\varphi_2$ can be determined by utilizing the boundary conditions in \eqref{vhvmeq2}. Applying the boundary conditions and jump relations, we obtain the system of boundary integral equations
\begin{align*}
\big({K}_k + {I} \big)\varphi_1 +  {S}_{\text{i}k}\varphi_2 &= -2 u^{\text{inc}}  \\
{T}_k\varphi_1  + \big({K}'_{\text{i}k}-{I} \big)\varphi_2 &= -2 \partial_\nu u^{\text{inc}}.
\end{align*}
For simplicity, the above linear system can be written as
\begin{align}\label{BIE}
\begin{pmatrix}
{K}_k + {I} & {S}_{\text{i}k} \\
{T}_k & {K}'_{\text{i}k}-{I}
\end{pmatrix}
\begin{pmatrix}
\varphi_1 \\ 
\varphi_2
\end{pmatrix}
=-2
\begin{pmatrix}
u^{\text{inc}} \\
\dnu{u^{\text{inc}}}
\end{pmatrix},
\end{align}
where the bounded integral operators are defined as follows (cf. \cite{int-equ-book,mclean}):
\begin{align*}
({S}_\tau \phi)(x) = 2\int_{\partial D} \Phi_\tau(x,y) \phi(y) \, \text{d}s(y),  \quad & \quad ({K}_\tau\phi)(x) = 2\int_{\partial D} \dnuy{\Phi_\tau(x,y)} \phi(y) \, \text{d}s(y), \\
 ({K}'_\tau)\phi(x) = 2\int_{\partial D} \dnux{\Phi_\tau(x,y)} \phi(y) \, \text{d}s(y), & \quad  ({T}_\tau)\phi(x) = 2\int_{\partial D} \dnux \dnuy{\Phi_\tau(x,y)} \phi(y)  \, \text{d}s(y), 
\end{align*}
for $x \in \partial D$, with the parameter $\tau$ equal to either $k$ or i$k$.

The matrix operator in \eqref{BIE} can be denoted as $\mathcal{Q} : \big( H^p (\partial D) \big)^2 \to  \big( H^{p+2 }(\partial D) \big)^2$ for any $p \geq 0$, where 
\begin{align*}
\mathcal{Q} =
\begin{pmatrix}
{K}_k + {I} & {S}_{\text{i}k} \\
{T}_k & {K}'_{\text{i}k}-{I}
\end{pmatrix}. 
\end{align*} 
It is shown in \cite{DongLi24} that, provided that the boundary $\partial D$ is analytic, the operator $\mathcal{Q} $ is a continuous bijection for any $p \geq 0$. In our analysis, we consider the case when $p=0$. Therefore, by the bounded inverse theorem, we have that 
$$ \mathcal{Q}^{-1}: \big( H^{2} (\partial D) \big)^2 \to  \big( L^{2}(\partial D) \big)^2 $$
is a bounded linear operator. Hence, the system of boundary integral equations in \eqref{BIE} can be solved as
\begin{align*}
\begin{pmatrix}
\varphi_1 \\ 
\varphi_2
\end{pmatrix}
=-2 \mathcal{Q} ^{-1}
\begin{pmatrix}
u^{\text{inc}} \\
\dnu{u^{\text{inc}}}
\end{pmatrix}.
\end{align*}

Now, we consider the injection $I_1: \big(L^2(\partial D)\big)^2 \to L^2(\partial D)$ given by  
\begin{align*}
I_1\begin{pmatrix}
\varphi_1 \\ 
\varphi_2
\end{pmatrix} = \varphi_1.
\end{align*}
This operator $I_1$ is linear and bounded, as follows from its definition. From \eqref{scat-ansatz}, we have
$$u^\infty (\hat{x},d) = \int_{\partial D} \dnuy \text{e}^{-\text{i}k\hat{x}\cdot y}  \varphi_1(y;d) \, \text{d}s(y), $$
where the dependence on the incident direction $d$ is made explicit. To proceed, notice that the above expression represents the far-field pattern of the double layer potential. With this in mind, we define
\begin{align*}
(DL_k)^\infty  \varphi_1 &=  \int_{\partial D} \dnuy \text{e}^{-\text{i}k\hat{x}\cdot y}  \varphi_1(y;d) \, \text{d}s(y)= -\text{i}k \int_{\partial D} \nu(y)\cdot \hat{x} \text{e}^{-\text{i}k\hat{x}\cdot y}  \varphi_1(y;d) \, \text{d}s(y), 
\end{align*}
which is a bounded linear operator mapping $ L^2(\partial D)$ to $L^2(\mathbb{S}^1)$.

Therefore, the far-field pattern for a clamped cavity $D$ can be expressed as 
\begin{align}\label{ffp}
u^\infty(\hat{x}, d) =-2 (DL_k)^\infty I_1 \mathcal{Q} ^{-1}(u^{\text{inc}}, \dnu{u^{\text{inc}}})^\top.
\end{align} 
Since the kernel of $(DL_k)^\infty$ and $(u^{\text{inc}}, \dnu{u^{\text{inc}}})^\top$ depend analytically on $\hat{x}$ and $d$, respectively, the far-field pattern $u^\infty$ depends analytically on $(\hat{x}, d) \in \mathbb{S}^1 \times \mathbb{S}^1$. Consequently, we obtain the following result. 

\begin{theorem}
The far-field operator $F: L^2(\mathbb{S}^1) \to L^2(\mathbb{S}^1)$, as defined by \eqref{ffop}, corresponding to the biharmonic clamped scattering problem \eqref{biharmonic}--\eqref{SRC}, is compact.
\end{theorem}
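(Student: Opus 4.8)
The plan is to recognize $F$ as an integral operator on $L^2(\mathbb{S}^1)$ whose kernel is precisely the far-field pattern $u^\infty(\hat{x},d)$, and then to exploit the analyticity established immediately above to invoke a standard compactness criterion. From the definition \eqref{ffop}, $F$ acts by integration against $u^\infty(\hat{x},d)$ over the incident direction $d\in\mathbb{S}^1$, so the entire question reduces to the regularity of this kernel on the product domain $\mathbb{S}^1\times\mathbb{S}^1$.

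First I would use the conclusion reached just before the theorem: the factorization \eqref{ffp} exhibits $u^\infty$ as a composition of operators whose defining kernels depend analytically on $\hat{x}$ and on $d$, so that $u^\infty$ is analytic, and in particular continuous, on $\mathbb{S}^1\times\mathbb{S}^1$. Since $\mathbb{S}^1\times\mathbb{S}^1$ is compact, a continuous kernel is bounded there, whence
\[
\int_{\mathbb{S}^1}\int_{\mathbb{S}^1} |u^\infty(\hat{x},d)|^2 \, \dd s(d)\, \dd s(\hat{x}) < \infty,
\]
so the kernel belongs to $L^2(\mathbb{S}^1\times\mathbb{S}^1)$.

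Finally I would appeal to the Hilbert--Schmidt theorem: an integral operator on $L^2$ whose kernel is square-integrable over the product space is a Hilbert--Schmidt operator, hence compact. This immediately yields the compactness of $F$. Alternatively, one could argue directly from \eqref{ffp} that $F = -2\,(DL_k)^\infty I_1 \mathcal{Q}^{-1}\mathcal{H}$, where $\mathcal{H}$ denotes the Herglotz-type operator assembling the data $(u^{\text{inc}},\dnu u^{\text{inc}})^\top$ over $d$; here the leading factor $(DL_k)^\infty$ has the analytic kernel $-\text{i}k\,\nu(y)\cdot\hat{x}\,\text{e}^{-\text{i}k\hat{x}\cdot y}$ on $\partial D\times\mathbb{S}^1$ and is therefore compact, while the remaining factors are bounded, so $F$ is compact as a compact operator composed with bounded ones.

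I expect no genuine obstacle to remain, since the substantive work, namely establishing the analytic dependence of $u^\infty$ on both variables through the factorization, has already been carried out. The only point requiring minor care is confirming that the analyticity of each factor survives composition with the bounded operators $I_1$ and $\mathcal{Q}^{-1}$; this holds because those operators act in the spatial variable on $\partial D$ and leave the parametric dependence on $\hat{x}$ and $d$ intact.
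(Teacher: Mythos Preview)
Your proposal is correct and follows essentially the same approach as the paper: the paper establishes that $u^\infty(\hat{x},d)$ depends analytically on $(\hat{x},d)\in\mathbb{S}^1\times\mathbb{S}^1$ via the factorization \eqref{ffp} and then states the compactness as an immediate consequence, which you make explicit through the Hilbert--Schmidt argument (continuous kernel on a compact product domain implies square-integrable kernel, hence compactness). Your alternative route via the compactness of $(DL_k)^\infty$ composed with bounded operators is equally valid and in the same spirit.
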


We now focus on deriving a factorization of the far-field operator $F$. This factorization is crucial for analyzing the direct sampling methods studied in this section. Once an appropriate factorization of $F$ is obtained, we can assess its applicability in reconstructing $D$ from the given data.

We define the operator $H: L^2(\mathbb{S}^1) \to \big( H^{2}(\partial D)\big)^2$ as follows: 
\begin{align*}
Hg = (v_g, \dnu{v_g})^{\top},
\end{align*}
where the Herglotz wave function $v_g$ is given by 
\begin{align}\label{hwfpq}
v_g (x) = \int_{\mathbb{S}^1} \text{e}^{\text{i}k x\cdot d} g(d)\, \text{d}s(d).
\end{align}

Notice that, similar to the case of acoustic scattering, the far-field operator can be factorized using \eqref{ffp} and the superposition principle. This follows directly from the linearity of our scattering problem \eqref{biharmonic} with respect to the incident field and its normal derivative on $\partial D$.

\begin{theorem}\label{ff-factor}
The far-field operator $F: L^2(\mathbb{S}^1) \to L^2(\mathbb{S}^1)$, defined by \eqref{ffop} for the biharmonic clamped scattering problem \eqref{biharmonic}--\eqref{SRC}, has the following factorization
\begin{align*}
Fg = -2 (DL_k)^\infty I_1 \mathcal{Q}^{-1}Hg. 
\end{align*}
\end{theorem}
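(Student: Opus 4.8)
The plan is to derive the factorization directly from the single-incidence representation \eqref{ffp} via the superposition principle, exploiting that the boundary integral system \eqref{BIE} depends linearly on its right-hand side, namely the Cauchy data $(u^{\text{inc}}, \dnu u^{\text{inc}})^\top$ of the incident field. First, I would insert the single-incidence formula \eqref{ffp} into the definition \eqref{ffop} of the far-field operator, writing the incident field explicitly as $u^{\text{inc}}(x;d) = \text{e}^{\text{i}kx\cdot d}$ in order to track its dependence on the incident direction $d$. This expresses $(Fg)(\hat{x})$ as the integral over $d \in \mathbb{S}^1$ of the operator chain $-2(DL_k)^\infty I_1 \mathcal{Q}^{-1}$ applied to the Cauchy data of the plane wave, weighted by $g(d)$.

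Second, because the operators $(DL_k)^\infty$, $I_1$, and $\mathcal{Q}^{-1}$ are bounded, linear, and independent of $d$, I would pull the $d$-integral inside the operator chain, so that it acts only on the quantity $\int_{\mathbb{S}^1}\big(u^{\text{inc}}(\cdot\,;d),\, \dnu u^{\text{inc}}(\cdot\,;d)\big)^\top g(d)\, \dd s(d)$. Third, I would recognize this remaining integral as the Cauchy data of the Herglotz wave function: by \eqref{hwfpq}, integrating the plane wave against $g$ produces $v_g$, and interchanging the normal derivative with the integral produces $\dnu v_g$, so the integral equals $(v_g, \dnu v_g)^\top = Hg$. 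Substituting this back yields the claimed identity $Fg = -2(DL_k)^\infty I_1 \mathcal{Q}^{-1}Hg$.

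The main (and essentially only) obstacle is technical: rigorously justifying the two interchanges, namely of the $d$-integral with the bounded operators, and of the normal derivative with the $d$-integral. The former is handled by interpreting the $d$-integral as a Bochner integral valued in $\big(H^{2}(\partial D)\big)^2$ and invoking the continuity of each operator, or equivalently by approximating with Riemann sums and passing to the limit using boundedness. The latter follows from the smooth dependence of the plane-wave kernel $\text{e}^{\text{i}kx\cdot d}$ on $x$, together with dominated convergence, so that differentiation under the integral sign is permissible.

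Once these interchanges are granted, the factorization is an immediate consequence of the linearity already built into \eqref{ffp}, and no further scattering-theoretic input is required beyond the well-posedness and mapping properties of $\mathcal{Q}^{-1}$ recalled above. I would note that this mirrors exactly the corresponding argument for the acoustic far-field operator, which is precisely the point the authors emphasize: the additional lower-order structure of the biharmonic problem is entirely absorbed into the operator $\mathcal{Q}$ and the single-incidence formula \eqref{ffp}, so the superposition step proceeds without alteration.
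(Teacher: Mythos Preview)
Your proposal is correct and follows exactly the approach the paper indicates: the paper does not give a detailed proof but simply remarks that the factorization follows from \eqref{ffp} together with the superposition principle and the linearity of the scattering problem with respect to the incident Cauchy data. Your write-up supplies the routine details (Bochner integral/Riemann-sum justification for moving the $d$-integral through the bounded operators, and differentiation under the integral sign for $\dnu v_g$) that the paper leaves implicit.
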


To complete the derivation of our direct sampling method, we require one additional key ingredient. Specifically, we utilize the Funk--Hecke integral identity (cf. \cite{Liu}):
$$ \int_{\mathbb{S}^{1}} \text{e}^{\text{i}k(x-z)\cdot d} \, \text{d}s(d) = 2\pi J_0(k|x-z|),$$
where $J_0$ denotes the first kind Bessel function of order zero. This result can be readily derived from the Jacobi--Anger expansion for an incident plane wave (cf. \cite{ColtonKress}). If we let $\phi_z = \text{e}^{-\text{i}kz\cdot d}$, which is the far-field pattern of $\Phi_k (z,y)$, then $v_{\phi_z} = 2\pi J_0(k|x-z|)$. It is clear to note that the adjoint of $(DL_k)^\infty$ is given by 
$$ \left[(DL_k)^\infty\right]^* \phi = \text{i} k \int_{\mathbb{S}^{1}} \nu(y)\cdot d \text{e}^{\text{i}k y\cdot d}\phi(d)\, \text{d}s(d) = \dnuy  \int_{\mathbb{S}^{1}} \text{e}^{\text{i}ky\cdot d}  \phi(d)\, \text{d}s(d),$$
which implies that the  $\left[(DL_k)^\infty\right]^* \phi = \dnuy v_\phi$, i.e., the normal derivative of the Herglotz wave function. 

Combining the factorization and the above integral identities, we study two direct sampling imaging functions. We demonstrate that the following two imaging functions
\begin{align}\label{imagfunc}
z\longmapsto \left| (\phi_z, F\phi_z)_{L^2(\mathbb{S}^1)}\right|^\rho,\quad z \longmapsto \|F{\phi_z}\|^\rho_{L^2(\mathbb{S}^1)}
\end{align}
can effectively recover the cavity because they exhibit `rapid' decay as $\text{dist}(z,D) \rightarrow \infty$. The parameter $\rho>0$ can be selected to enhance the resolution of the imaging functions.

\begin{lemma}\label{farfieldineq}
The far-field operator $F: L^2(\mathbb{S}^1) \to L^2(\mathbb{S}^1)$, defined by \eqref{ffop} corresponding to the biharmonic clamped scattering problem \eqref{biharmonic}--\eqref{SRC}, satisfies 
$$ \left| (\phi_z, F\phi_z)_{L^2(\mathbb{S}^1)}\right| \leq C\| J_0(k|\cdot -z|)\|^2_{H^4(D)},$$
where $\phi_z= \mathrm{e}^{-\mathrm{i}kz \cdot d}$ is the far-field pattern of the fundamental solution $\Phi_k (z, {y})$. 
\end{lemma}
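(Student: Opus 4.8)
The plan is to start from the factorization established in Theorem \ref{ff-factor}, namely $F = -2 (DL_k)^\infty I_1 \mathcal{Q}^{-1} H$, and to evaluate the bilinear form $(\phi_z, F\phi_z)_{L^2(\mathbb{S}^1)}$ by pushing the test function $\phi_z$ through each factor using the adjoint identities recorded just before the lemma. First I would use the self-adjoint pairing to move $(DL_k)^\infty$ onto the left slot: writing $(\phi_z, F\phi_z) = -2\big(\left[(DL_k)^\infty\right]^*\phi_z,\, I_1 \mathcal{Q}^{-1} H\phi_z\big)$, and then invoke the computation $\left[(DL_k)^\infty\right]^*\phi_z = \dnuy v_{\phi_z}$. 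The crucial observation, supplied by the Funk--Hecke identity, is that $v_{\phi_z} = 2\pi J_0(k|\cdot - z|)$, so both the left factor and the Herglotz data $H\phi_z = (v_{\phi_z}, \dnu v_{\phi_z})^\top$ are expressed in terms of the single explicit function $J_0(k|\cdot - z|)$ and its normal derivative on $\partial D$.

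Next I would bound the resulting pairing by the product of norms of its constituent pieces. The inner operator $I_1 \mathcal{Q}^{-1}$ is bounded from $\big(H^2(\partial D)\big)^2$ into $L^2(\partial D)$ by the mapping properties stated in the excerpt (the bounded inverse theorem applied to $\mathcal{Q}$, followed by the bounded injection $I_1$). Pairing against $\dnuy v_{\phi_z} \in L^2(\partial D)$ therefore gives, by Cauchy--Schwarz and boundedness, an estimate of the shape
$$\left|(\phi_z, F\phi_z)_{L^2(\mathbb{S}^1)}\right| \le C\, \|\dnu v_{\phi_z}\|_{L^2(\partial D)}\, \|H\phi_z\|_{(H^2(\partial D))^2},$$
where $C$ absorbs the operator norms of $(DL_k)^\infty$ (appearing through its adjoint), $I_1$, and $\mathcal{Q}^{-1}$, together with the constant $2$. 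Since $\|H\phi_z\|_{(H^2(\partial D))^2} = \|(v_{\phi_z}, \dnu v_{\phi_z})\|$ and $v_{\phi_z} = 2\pi J_0(k|\cdot - z|)$, every factor on the right is controlled by Sobolev norms of $J_0(k|\cdot - z|)$ restricted to $\partial D$.

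The final step is to convert these boundary Sobolev norms on $\partial D$ into the volumetric $H^4(D)$ norm appearing in the statement. Here I would use the trace theorem: the trace map $H^4(D) \to H^{7/2}(\partial D) \times H^{5/2}(\partial D)$ sending a function to its Dirichlet and Neumann data is bounded, and $H^{7/2}(\partial D) \hookrightarrow H^2(\partial D)$ while $H^{5/2}(\partial D) \hookrightarrow L^2(\partial D)$ (indeed into $H^2$), so both $\|v_{\phi_z}\|_{H^2(\partial D)}$ and $\|\dnu v_{\phi_z}\|_{L^2(\partial D)}$ are dominated by $\|J_0(k|\cdot - z|)\|_{H^4(D)}$ up to a constant. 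The reason $H^4$ is the natural choice is that controlling two boundary derivatives of the Dirichlet trace forces the use of fourth-order interior regularity, and $J_0(k|\cdot - z|)$ is smooth enough for this to be finite. Collecting the two factors then yields the quadratic bound $C\|J_0(k|\cdot - z|)\|_{H^4(D)}^2$, as claimed.

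I expect the main obstacle to be bookkeeping the exact Sobolev orders so that the trace inequalities line up cleanly and the power of the norm comes out to exactly $2$ rather than a split of two different orders; one must verify that both the Dirichlet and Neumann traces of $J_0(k|\cdot - z|)$ are genuinely dominated by the \emph{same} $H^4(D)$ norm, which is what allows the two factors to combine into a single squared quantity. The operator-norm boundedness of each factor is already guaranteed by the mapping statements quoted earlier, so the analytic content is concentrated entirely in this trace-and-embedding argument rather than in any delicate estimate of $F$ itself.
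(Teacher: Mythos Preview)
Your proposal is correct and follows essentially the same approach as the paper: factorize $F$, move $(DL_k)^\infty$ to the left via its adjoint, apply Funk--Hecke to express everything in terms of $J_0(k|\cdot - z|)$, use the boundedness of $I_1\mathcal{Q}^{-1}$ together with Cauchy--Schwarz, and finish with the trace theorem $H^4(D)\to H^{7/2}(\partial D)\times H^{5/2}(\partial D)$ to dominate both boundary factors by the same $H^4(D)$ norm. The paper's proof is precisely this, with the same Sobolev bookkeeping you anticipate.
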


\begin{proof}
Recalling the factorization $F=-2 (DL_k)^\infty I_1 \mathcal{Q}^{-1}H$ given in Theorem \ref{ff-factor} and using the boundedness of the operators  
$$I_1 :  \big(L^2(\partial D)\big)^2 \to L^2(\partial D), \quad \mathcal{Q}^{-1} : \big( H^{2} (\partial D) \big)^2 \to  \big( L^2(\partial D) \big)^2, $$
we have 
\begin{align*}
\left| (\phi_z, F\phi_z)_{L^2(\mathbb{S}^1)}\right| &=  2 \left| \left( [ (DL_k)^\infty ]^* \phi_z, I_1 \mathcal{Q}^{-1} H\phi_z \right)_{L^2(\partial D)}\right| \\ 
& \leq C \left\| [(DL_k)^\infty]^*\phi_z \right\|_{L^2(\partial D)} \left\| H\phi_z \right\|_{H^2(\partial D)\times H^2(\partial D)}. 
\end{align*}
It follows from the Funk--Hecke integral identity that 
$$ H\phi_z =  2\pi \big(J_0(k|x-z|), \dnux{J_0(k|x-z|)} \big)^{\top},\quad  [(DL_k)^\infty]^*\phi_z =  2 \pi \dnux{J_0(k|x-z|)}.$$

It is clear to note that 
\begin{align*}
 \left\| [(DL_k)^\infty]^*\phi_z \right\|_{L^2(\partial D)} = 2 \pi  \|\dnu J_0(k|\cdot-z|) \|_{L^{2}(\partial D)} \leq C \|\dnu J_0(k|\cdot-z|) \|_{H^{1/2}(\partial D)}.
\end{align*}
By applying the trace theorem and utilizing the continuous embedding of $H^4(D)$ into $H^2(D)$, we obtain
$$\left\| [(DL_k)^\infty]^*\phi_z \right\|_{L^2(\partial D)} \leq C \| J_0(k|\cdot-z|) \|_{H^{4}(D)}.$$
Similarly, we have 
\begin{align*}
 \left\| H\phi_z \right\|^2_{H^2(\partial D)\times H^2(\partial D)} &=  2\pi  \|J_0(k|\cdot-z|)\|^2_{H^2(\partial D)} +  2\pi \|\dnu J_0(k|\cdot-z|)\|^2_{H^2(\partial D)}\\
 &\leq  C \left(  \|J_0(k|\cdot-z|)\|^2_{H^{7/2}(\partial D)}  + \|\dnu J_0(k|\cdot-z|)\|^2_{H^{5/2}(\partial D)} \right). 
\end{align*}
By applying the trace theorem again, we have
\begin{align*}
 \left\| H\phi_z \right\|_{H^2(\partial D)\times H^2(\partial D)} \leq C\|J_0(k|\cdot-z|)\|_{H^4(D)},
\end{align*}
which completes the proof.
\end{proof}

We now aim to use the estimate from Theorem \ref{farfieldineq} to show that the imaging functions in \eqref{imagfunc} decay as the sampling point $z \in \R^2$ moves away from the cavity. In previous works on acoustic scattering problems (see, for example, \cite{DSM-GH, DSMnf, DSM-HK, Zhang}), it was sufficient to estimate $\| J_0(k|\cdot -z|)\|^2_{H^1(D)}$. Using the asymptotic estimates 
$$J_0(t) = \frac{\cos t + \sin t}{\sqrt{\pi t}}  \left\{1 + \mathcal{O} (t^{-1}) \right\},\quad J_1(t) = \frac{\cos t - \sin t}{\sqrt{\pi t}}  \left\{-1 + \mathcal{O} (t^{-1}) \right\}$$ 
as $t \to \infty$, it follows that
$$\|J_0(k|\cdot-z|)\|_{H^1(D)} = \mathcal{O}\left(\mathrm{dist}(z, D)^{-1}\right),\quad \mathrm{dist}(z, D) \to \infty.$$
In our case, we need to establish this estimate for $\| J_0(k|\cdot -z|)\|^2_{H^4(D)}$. This estimate can be extended to hold for any $H^q(D)$-norm, where $q \in \N$.

\begin{lemma}\label{herglotz1}
For a fixed bounded domain $D \subset \R^2$, we have 
\begin{align*}
\| J_0(k|\cdot -z|)\|^2_{H^4(D)}= \mathcal{O}\big(\mathrm{dist}(z, D)^{-1}\big),\quad\mathrm{dist}(z, D) \to \infty. 
\end{align*}
\end{lemma}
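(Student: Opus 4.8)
The plan is to reduce the $H^4(D)$-norm estimate to a collection of decay estimates for the Bessel function $J_0$ and its derivatives, each of which follows from the known asymptotic expansions. First I would observe that since $D$ is bounded, it suffices to prove the decay estimate on a fixed ball $B$ containing $\overline{D}$, because $\mathrm{dist}(z,D)\to\infty$ forces $|x-z|\to\infty$ uniformly for $x\in D$, and $\|\cdot\|_{H^4(D)}\le\|\cdot\|_{H^4(B)}$. The function $u(x)\coloneqq J_0(k|x-z|)$ is radial in the variable $x-z$, so I would set $t=k|x-z|$ and express all the needed derivatives of $u$ up to order four in terms of $J_0(t)$ and its $t$-derivatives, using the chain rule. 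The key structural point is that $J_0$ satisfies the Bessel equation, so every derivative $J_0^{(m)}(t)$ can be written as a finite combination of $J_0(t)$ and $J_1(t)$ with coefficients that are rational functions of $t$; consequently the asymptotic expansions quoted in the text for $J_0(t)$ and $J_1(t)$ give $J_0^{(m)}(t)=\mathcal{O}(t^{-1/2})$ as $t\to\infty$ for every fixed $m$.

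The heart of the computation is tracking the powers of $|x-z|$ that the chain rule produces. When I differentiate $u(x)=J_0(k|x-z|)$ with respect to the Cartesian coordinates of $x$, each spatial derivative either lands on the argument $t=k|x-z|$ (producing a bounded factor $k\,\partial_{x_j}|x-z|$, since the gradient of the distance is a unit vector) or lands on a factor of $|x-z|^{-1}$ coming from earlier differentiations of the distance function. The worst case, which dictates the overall decay rate, is the one where all $m$ derivatives fall on the Bessel argument, giving a term $k^m J_0^{(m)}(t)$ times a product of unit-length gradient factors; every such term is $\mathcal{O}(t^{-1/2})=\mathcal{O}\big(\mathrm{dist}(z,D)^{-1/2}\big)$ uniformly on $D$. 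The terms in which some derivatives hit the geometric factors $|x-z|^{-1}$ only improve the decay, since each such factor is itself $\mathcal{O}\big(\mathrm{dist}(z,D)^{-1}\big)$. Thus every partial derivative $\partial^\alpha u$ with $|\alpha|\le 4$ satisfies the pointwise bound
\begin{align*}
|\partial^\alpha u(x)| \leq C\,\mathrm{dist}(z,D)^{-1/2},\quad x\in D,
\end{align*}
with $C$ depending only on $k$, $D$, and $\alpha$.

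Finally I would integrate these pointwise bounds over the fixed bounded set $D$. Since $|D|<\infty$, squaring the pointwise bound and summing over the finitely many multi-indices $|\alpha|\le 4$ gives
\begin{align*}
\|J_0(k|\cdot-z|)\|^2_{H^4(D)} = \sum_{|\alpha|\le 4}\|\partial^\alpha u\|^2_{L^2(D)} \leq C\,|D|\,\mathrm{dist}(z,D)^{-1} = \mathcal{O}\big(\mathrm{dist}(z,D)^{-1}\big),
\end{align*}
which is exactly the claimed estimate. I expect the main obstacle to be purely bookkeeping: organizing the chain-rule expansion so that one can cleanly verify that no term decays more slowly than $t^{-1/2}$, i.e., confirming that the geometric singularities $|x-z|^{-j}$ introduced by differentiating the distance function never cancel the decay but only reinforce it. This is straightforward because $\mathrm{dist}(z,D)\to\infty$ keeps $|x-z|$ bounded away from zero on $D$, so all negative powers of $|x-z|$ are themselves decaying and harmless; the same argument evidently extends to any $H^q(D)$-norm with $q\in\N$, as remarked in the text.
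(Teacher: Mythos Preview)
Your argument is correct, but it takes a genuinely different route from the paper. The paper does not differentiate $J_0(k|x-z|)$ directly via the chain rule; instead it first identifies $2\pi J_0(k|\cdot-z|)$ with the Herglotz wave function $v_{\phi_z}$ through the Funk--Hecke identity, differentiates under the integral sign to get $\partial_{x_1}^s\partial_{x_2}^t v_{\phi_z}=(\text{i}k)^{s+t}\int_{\mathbb{S}^1} d_1^s d_2^t\,\text{e}^{\text{i}k(x-z)\cdot d}\,\dd s(d)$, and then inserts the Jacobi--Anger expansion. Orthogonality of the exponentials $\text{e}^{\text{i}n\phi}$ forces only finitely many Fourier modes to survive, yielding the closed form $\partial_{x_1}^s\partial_{x_2}^t v_{\phi_z}=\sum_{|n|\le s+t} c_n J_n(k|x-z|)\text{e}^{\text{i}n\theta}$, from which the $\mathcal{O}(t^{-1/2})$ decay is immediate via the asymptotics of $J_n$. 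Your approach is more elementary and self-contained---it needs neither the Herglotz representation nor Jacobi--Anger, and it would apply to any radial function whose profile has $\mathcal{O}(t^{-1/2})$ derivatives---but the bookkeeping cost is that you must verify that the rational coefficients produced when reducing $J_0^{(m)}$ to $J_0,J_1$ via the Bessel ODE remain bounded as $t\to\infty$ (they do: an induction shows $J_0^{(m)}=P_m(t^{-1})J_0+Q_m(t^{-1})J_1$ for polynomials $P_m,Q_m$). The paper's route sidesteps this entirely by landing directly on a finite sum of $J_n$'s, which also makes the extension to arbitrary $H^q(D)$ more transparent.
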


\begin{proof}
It follows from the Funk--Hecke integral identity that 
$$v_{\phi_z} = 2\pi J_0(k|x-z|).$$ 
Therefore, it suffices to prove the result for the Herglotz wave function. For any $s,t \in \N \cup \{0\}$ such that $0\leq s+t \leq 4$, we have
\begin{align*}
 \partial_{x_1}^{s} \partial_{x_2}^{t} v_{\phi_z}  =  (\text{i}k)^{s+t }  \int_{\mathbb{S}^{1}} d_1^s d_2^t \text{e}^{\text{i}k(x-z)\cdot d} \, \text{d}s(d). 
 \end{align*}
Using $d=(\cos \phi , \sin \phi)$ and the Jacobi--Anger expansion 
$$\text{e}^{\text{i}k(x-z)\cdot d} = \sum\limits_{|n|=0}^\infty \text{i}^n J_n(k|x-z|)\text{e}^{\text{i}n(\theta -\phi)}, $$
where $\theta$ is the polar angle for $x-z$, we obtain 
\begin{align*}
 \partial_{x_1}^{s} \partial_{x_2}^{t} v_{\phi_z} =   (\text{i}k)^{s+t } \sum\limits_{|n|=0}^\infty  \text{i}^n   \int_{0}^{2\pi}   J_n(k|x-z|)\text{e}^{\text{i}n(\theta -\phi)}   (\cos{\phi})^{s} (\sin{\phi})^{t} \, \text{d}\phi,
 \end{align*}
which implies  
\begin{align*}
 \partial_{x_1}^{s} \partial_{x_2}^{t} v_{\phi_z} =  (\text{i}k)^{s+t } \sum\limits_{|n|=0}^\infty  \text{i}^n   J_n(k|x-z|) \text{e}^{\text{i}n\theta}\omega_n,
\end{align*}
where 
\begin{align*}
\omega_n = \frac{1 }{\text{i}^t 2^{s+t}} \int_{0}^{2\pi} \big(\text{e}^{\text{i}\phi} + \text{e}^{- \text{i}\phi}\big)^{s} \big(\text{e}^{\text{i}\phi} - \text{e}^{- \text{i}\phi} \big)^{t} \text{e}^{- \text{i}n\phi}\, \text{d} \phi.
\end{align*}

Note that, due to the orthogonality of $\text{e}^{\text{i}n\phi}$ for $n \in \Z$ over the interval $[ 0 , 2\pi ]$, it follows that $\omega_n =0$ for all $|n|>s+t$. 
This implies that 
 \begin{align*}
 \partial_{x_1}^{s} \partial_{x_2}^{t} v_{\phi_z} = \sum_{|n|=0}^{s+t} c_n J_n(k|x-z|)  \text{e}^{\text{i}n\theta}, 
\end{align*}
where $c_n =  (\text{i}k)^{s+t } \text{i}^n \omega_n$.

Given the asymptotic behavior of the Bessel function (cf. \cite{ColtonKress,Zhang}),
\begin{align*}
J_n(t) = \mathcal{O}(t^{-1/2}) \quad \text{as }  \quad t \to \infty,
\end{align*}
and the fact that the partial derivatives $ \partial_{x_1}^{s} \partial_{x_2}^{t} v_{\phi_z} $ form a finite sum of Bessel functions, we obtain
\begin{align*}
\|\partial_{x_1}^{s} \partial_{x_2}^{t} v_{\phi_z}\|^2_{L^2(D)}=  \mathcal{O}\big(\mathrm{dist}(z, D)^{-1}\big),  \quad\mathrm{dist}(z, D) \to \infty, 
\end{align*}
which proves the claim. 
\end{proof}

With this result, along with Lemma \ref{farfieldineq}, we state our first resolution analysis result for the imaging function
$$ z \longmapsto \left| (\phi_z, F\phi_z)_{L^2(\mathbb{S}^1)}\right|^\rho, \quad \rho>0.$$
The following result shows that the this imaging function decays rapidly as the sampling point $z$ moves away from the cavity $D$. 

\begin{theorem}\label{farfieldineq1}
The far-field operator $F: L^2(\mathbb{S}^1) \to L^2(\mathbb{S}^1)$, defined by \eqref{ffop} corresponding to the biharmonic clamped scattering problem \eqref{biharmonic}--\eqref{SRC}, satisfies 
$$ \left| (\phi_z, F\phi_z)_{L^2(\mathbb{S}^1)}\right| = \mathcal{O}\big(\mathrm{dist}(z, D)^{-1}\big),\quad \mathrm{dist}(z, D) \to \infty,$$
where $\phi_z= \mathrm{e}^{-\mathrm{i}kz \cdot d}$ is the far-field pattern of the fundamental solution $\Phi_k (z, {y})$. 
\end{theorem}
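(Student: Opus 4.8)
The plan is to obtain the stated decay rate by directly composing the two preceding lemmas, since together they already isolate and control the relevant quantity. First I would invoke Lemma \ref{farfieldineq}, which bounds the bilinear form by
$$\left| (\phi_z, F\phi_z)_{L^2(\mathbb{S}^1)}\right| \leq C \| J_0(k|\cdot -z|)\|^2_{H^4(D)}$$
with a constant $C$ independent of the sampling point $z$. This reduces the problem entirely to estimating the $H^4(D)$-norm of the shifted Bessel profile $J_0(k|\cdot - z|)$.

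Next I would apply Lemma \ref{herglotz1}, which asserts precisely that $\| J_0(k|\cdot -z|)\|^2_{H^4(D)} = \mathcal{O}\big(\mathrm{dist}(z,D)^{-1}\big)$ as $\mathrm{dist}(z,D) \to \infty$. Chaining the two estimates then yields
$$\left| (\phi_z, F\phi_z)_{L^2(\mathbb{S}^1)}\right| \leq C\, \mathcal{O}\big(\mathrm{dist}(z,D)^{-1}\big) = \mathcal{O}\big(\mathrm{dist}(z,D)^{-1}\big),$$
which is exactly the desired conclusion. No further estimates are required, and the argument is a single-line composition.

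Because the substantive work has already been carried out in the two lemmas, I do not anticipate any genuine obstacle in this final step. The conceptual point worth flagging, however, is where the difficulty was actually absorbed. In the acoustic (Helmholtz) setting one only needs to control $\| J_0(k|\cdot-z|)\|^2_{H^1(D)}$, whereas the clamped biharmonic boundary conditions force the appearance of the higher-order $H^4(D)$-norm in Lemma \ref{farfieldineq}: the Cauchy data $(v_g, \dnu v_g)$ live in $H^2(\partial D)\times H^2(\partial D)$, and the trace theorem transfers this to the interior $H^4(D)$-norm. The content of Lemma \ref{herglotz1}, namely that this stronger Sobolev norm nevertheless decays at the same $\mathrm{dist}(z,D)^{-1}$ rate as in the acoustic case, owing to the uniform $\mathcal{O}(t^{-1/2})$ Bessel asymptotics applied termwise to a finite Bessel expansion, is precisely what makes the standard acoustic imaging function transfer without modification to the biharmonic problem. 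I would emphasize this as the reason the present theorem inherits the acoustic resolution rate despite the jump from a second-order to a fourth-order operator.
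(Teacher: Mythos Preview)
Your proposal is correct and matches the paper's own treatment: the theorem is stated as an immediate consequence of Lemmas \ref{farfieldineq} and \ref{herglotz1}, with no additional argument supplied. Your commentary on why the $H^4(D)$-norm appears in place of the acoustic $H^1(D)$-norm is accurate and aligns with the discussion the paper gives between the two lemmas.
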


Next, we turn our attention to the other imaging function under consideration, given by
$$ z \longmapsto \|F{\phi_z}\|^\rho_{L^2(\mathbb{S}^1)},\quad \rho>0.$$
We demonstrate that this imaging function also decays rapidly as the sampling point $z$ moves away from the cavity $D$. To achieve this, we extend a well-known result from the case of acoustic sound-soft scatterers to our case of a biharmonic clamped cavity. This result is commonly used in the study of the factorization method for recovering sound soft scatterers. One major advantage of direct sampling methods over the more analytically rigorous factorization method is their stability with respect to noisy data. In fact, these methods can be used to recover scatterers even when the data contains a significant amount of noise. The factorization method for recovering a biharmonic clamped cavity has not yet been studied.

\begin{theorem}\label{ffoeq}
Let $F: L^2(\mathbb{S}^1) \to L^2(\mathbb{S}^1)$ be the far-field operator defined by \eqref{ffop}, corresponding to the biharmonic clamped scattering problem \eqref{biharmonic}--\eqref{SRC}. Then, there holds 
\begin{align*}
F-F^* = \frac{\textup{i}}{4\pi}F^*F, 
\end{align*}
where $F^*$ is the adjoint operator of $F$.
\end{theorem}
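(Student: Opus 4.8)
The plan is to establish the identity in its \emph{kernel form}, namely the scattering relation
$$ u^\infty(\hat x, d) - \ov{u^\infty(d, \hat x)} = \frac{\text{i}}{4\pi}\int_{\mathbb{S}^1} u^\infty(\hat y, d)\,\ov{u^\infty(\hat y, \hat x)}\,\dd s(\hat y), \qquad \hat x, d \in \mathbb{S}^1, $$
since a direct bookkeeping of the integral kernel of $F$ (the kernel of $F^*$ being $\ov{u^\infty(d,\hat x)}$ and that of $F^*F$ being $\int_{\mathbb{S}^1}\ov{u^\infty(\hat y,\hat x)}\,u^\infty(\hat y,d)\,\dd s(\hat y)$) shows this pointwise relation is exactly equivalent to $F - F^* = \tfrac{\text{i}}{4\pi}F^*F$. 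To derive it I would apply the second Green (Rayleigh--Green) identity for the biharmonic operator $\Delta^2 - k^4$ to the two total fields $u(\cdot, d)$ and $\ov{u(\cdot, d')}$ over the annular region $B_R \setminus \ov{D}$, and then let $R \to \infty$. Because both fields solve $\Delta^2 w - k^4 w = 0$ and $k$ is real, the volume contribution $\int_{B_R\setminus\ov D}\big(u(\cdot,d)\,\Delta^2\ov{u(\cdot,d')} - \ov{u(\cdot,d')}\,\Delta^2 u(\cdot,d)\big)\,\dd x$ vanishes identically.

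The first key observation is that the boundary contribution over $\partial D$ drops out entirely. The Rayleigh--Green boundary integrand is a sum of four terms, each carrying a factor of $u$, $\dnu u$, $\ov{u'}$, or $\dnu\ov{u'}$; by the clamped conditions \eqref{cbc} each of $u(\cdot,d)$, $\dnu u(\cdot,d)$, $\ov{u(\cdot,d')}$, $\dnu\ov{u(\cdot,d')}$ vanishes on $\partial D$, so \emph{every} term is annihilated. This is the exact biharmonic analogue of the sound-soft situation, and it is precisely where the clamped (zero Cauchy data) boundary condition does its work.

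The second and more delicate step is the evaluation of the $\partial B_R$ contribution as $R \to \infty$. Writing $u^{\text{scat}} = v_H + v_M$ as in \eqref{vhvm}, I would use that $v_M$ together with its derivatives $\Delta v_M = k^2 v_M$ and $\dnu\Delta v_M$ all decay exponentially, so that on $\partial B_R$ only the propagating part $\tilde u := u^{\text{inc}} + v_H$ survives and, because $\Delta v_H = -k^2 v_H$ and $\Delta u^{\text{inc}} = -k^2 u^{\text{inc}}$, one has $\Delta u \to -k^2\tilde u$ there. Substituting this, the four biharmonic boundary terms collapse (up to the common factor $-2k^2$) to the Helmholtz flux form $\int_{\partial B_R}\big(\tilde u(\cdot,d)\,\dnu\ov{\tilde u(\cdot,d')} - \ov{\tilde u(\cdot,d')}\,\dnu\tilde u(\cdot,d)\big)\,\dd s \to 0$. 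Splitting $\tilde u = u^{\text{inc}} + v_H$ into four products, the incident--incident term vanishes by Green's theorem on the full disc; the scattered--scattered term produces $-\tfrac{\text{i}}{4\pi}\int_{\mathbb{S}^1}u^\infty(\hat y,d)\,\ov{u^\infty(\hat y,d')}\,\dd s(\hat y)$ via the radiation condition \eqref{SRC1} and the far-field asymptotics \eqref{asymp}; and the two incident--scattered cross terms give $u^\infty(d',d)$ and $-\ov{u^\infty(d,d')}$ by the far-field representation \eqref{ffp1} (and its complex conjugate). Collecting these and relabeling $d' = \hat x$ yields exactly the scattering relation stated above.

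The main obstacle I anticipate is the rigorous justification of the limit in the third step: one must show that every product involving $v_M$ on $\partial B_R$---including the mixed products of $v_M$ with the $\mathcal{O}(R^{-1/2})$ propagating parts---is exponentially small, and simultaneously track the constants in \eqref{asymp}--\eqref{SRC1} through the scattered--scattered term carefully enough to land on precisely $\tfrac{\text{i}}{4\pi}$ rather than an unspecified multiple. The remaining ingredients---the vanishing of the incident--incident term and the identification of the two cross terms with far-field patterns---are routine once \eqref{ffp1} is in hand. I note in passing that taking $d'=d$ in the resulting relation recovers the optical-theorem identity $\operatorname{Im} u^\infty(d,d) = \tfrac{1}{8\pi}\|u^\infty(\cdot,d)\|^2_{L^2(\mathbb{S}^1)}$, a useful consistency check on the constant.
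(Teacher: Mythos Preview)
Your proposal is correct and follows essentially the same route as the paper: apply the biharmonic Green (Rayleigh--Green) identity in $B_R\setminus\ov{D}$, use the clamped Cauchy data to kill the $\partial D$ contribution, discard the exponentially decaying $v_M$-terms on $\partial B_R$, and then reduce to and evaluate the standard Helmholtz flux split into incident--incident, scattered--scattered, and cross terms. The only cosmetic difference is that the paper tests against Herglotz wave functions $v_g,v_h$ to obtain the weak identity $(Fg,h)-(g,Fh)=\tfrac{\text{i}}{4\pi}(Fg,Fh)$, whereas you work directly with plane-wave kernels to derive the equivalent pointwise scattering relation.
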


\begin{proof}
To prove the claim, let $g, h \in L^2(\mathbb{S}^1)$, and consider the Herglotz wave functions with kernels $g$ and $h$, respectively. Let $P, Q$ represent the total fields associated with the biharmonic clamped scattering problem \eqref{biharmonic}--\eqref{SRC}, corresponding to the incident waves $v_g$ and $v_h$, respectively. As in the previous section, the corresponding scattered fields have the decomposition $P^{\text{scat}} = P_H + P_M$ and $Q^{\text{scat}} = Q_H+Q_M$,  as defined by \eqref{vhvm}, and satisfy \eqref{vhvmeq1}--\eqref{vhvmeq3} with the respective incident waves.
By the superposition principle, $Fg$ and $Fh$ correspond to the far-field patterns for the scattered fields $P^{\text{scat}}$ and $Q^{\text{scat}}$, respectively. Then, by appealing to the exponential decay of the solutions to the modified Helmholtz equation, we obtain 
$$Fg =  P_H^\infty,\quad  Fh=Q_H^\infty.$$
From Green's identifies in $B_R \setminus \overline{D}$, we have 
\begin{align}\label{eqPQ}
0  = \int_{B_R \setminus \overline{D}} \paren{ \Delta^2P\ov{Q}  -P\Delta^2\ov{Q}} \dd x 
 = \int_{\partial B_R} \paren{ \dnu P \Delta\ov{Q} - P \dnu\Delta\ov{Q} + \dnu \Delta P \ov{Q} - \Delta P\dnu\ov{Q} }\dd s, 
\end{align}
where the boundary integral over $\partial D$ vanishes due to the clamped boundary conditions.

Notice that since the Herglotz wave function solves the Helmholtz equation in the entire plane, along with \eqref{vhvmeq1}, we obtain that
\begin{align*}
\Delta P= k^2(P_M - P_H - v_g) \quad \text{and} \quad \Delta Q = k^2(Q_M-Q_H-v_h).
\end{align*}
With this, we see that the first two terms in \eqref{eqPQ} can be written as 
\begin{multline*}
\int_{\partial B_R} \paren{ \dnu P \Delta\ov{Q}-P \dnu\Delta\ov{Q}  }\dd s \\
= k^2\int_{\partial B_R} \left[  \dnu\paren{P_H+P_M+v_g} \ov{\paren{Q_M-Q_H-v_h}} - \paren{P_H+P_M+v_g} \dnu\ov{\paren{Q_M-Q_H-v_h}} \right]\dd s.
\end{multline*}
Since the terms $P_M, \dnu P_M, Q_M$, and $\dnu Q_M$ decay exponentially as $R \to \infty$, we can simplify the previous equation as follows
\begin{multline*}
\int_{\partial B_R} \paren{ \dnu P \Delta\ov{Q}-P \dnu\Delta\ov{Q}  }\dd s\\ = 
k^2\int_{\partial B_R} \left[  \dnu\paren{P_H+v_g} \ov{\paren{-Q_H-v_h}}- \paren{P_H+v_g} \dnu\ov{\paren{-Q_H-v_h}}  \right]\dd s + o(1)
\end{multline*}
as $R \to \infty$. It can be verified from the Sommerfeld radiation condition \eqref{SRC1} and the asymptotic behavior \eqref{asymp} that
\begin{align*}
k^2\int_{\partial B_R}  \paren{ P_H\dnu \overline{Q}_H - \dnu P_H \overline{Q}_H} \dd s \to -\frac{\text{i}k^2}{4\pi} \int_{\mathbb{S}^1}P_H^\infty\overline{Q}_H^\infty \dd s =  -\frac{\text{i}k^2}{4\pi}\paren{Fg, Fh}_{L^2(\mathbb{S}^1)}
\end{align*}
as $R \to \infty$. By appealing to \eqref{hwfpq} we have that  
\begin{align*}
k^2 \int_{\partial B_R} \paren{ v_g\dnu \overline{Q}_H - \dnu v_g \overline{Q}_H} \dd s
 & = k^2 \int_{\mathbb{S}^1} g(\hat{x}) \left[ \int_{\partial B_R} \paren{\text{e}^{\text{i} k\hat{x}\cdot y} \dnu \overline{Q}_H(y) - \dnu \text{e}^{\text{i}k \hat{x} \cdot y} \overline{Q}_H(y)} \dd s(y) \right] \dd s(\hat{x})\\
 & =  -k^2 \int_{\mathbb{S}^1} g(\hat{x}) \overline{Q}_H^\infty(\hat{x}) \dd s(\hat{x}) \\
 &= - k^2\paren{g, Fh}_{L^2(\mathbb{S}^1)}, 
\end{align*}
where the conjugate of the far-field pattern of $Q_H$ from \eqref{ffp1} is used to obtain
\begin{align*}
k^2\int_{\partial B_R} \paren{P_H\dnu \overline{v}_h - \dnu P_H \overline{v}_h} \dd s = k^2 \paren{Fg, h}_{L^2(\mathbb{S}^1)}.
\end{align*}
Notice that the term
\begin{align*}
\int_{\partial B_R} \paren{v_g\dnu \overline{v}_h - \dnu v_g\overline{v}_h} \dd s =0 
\end{align*}
due to the fact that, both $v_g$ and $v_h$ satisfy the Helmholtz equation in the entire plane. Therefore, we have that 
\begin{align*}
\int_{\partial B_R} \paren{ \dnu P \Delta\ov{Q}-P \dnu\Delta\ov{Q}  }\dd s
\to 
-\frac{\text{i}k^2}{4\pi}\paren{Fg, Fh}_{L^2(\mathbb{S}^1)}  - k^2\paren{g, Fh}_{L^2(\mathbb{S}^1)} + k^2 \paren{Fg, h}_{L^2(\mathbb{S}^1)}
\end{align*}
as $R \to \infty$.

Now, we consider the other in \eqref{eqPQ}. Arguing is a similar manner, we see that
\begin{align*}
&\int_{\partial B_R} \paren{ \dnu \Delta P \ov{Q}-\Delta P \dnu \ov{Q}  }\dd s \\
&= k^2\int_{\partial B_R} \left[  \dnu\paren{-P_H+P_M-v_g} \ov{\paren{Q_M+Q_H+v_h}} - \paren{-P_H+P_M-v_g} \dnu\ov{\paren{Q_M+Q_H+v_h}} \right]\dd s \\
&= k^2\int_{\partial B_R} \left[  \dnu\paren{-P_H-v_g} \ov{\paren{Q_H+v_h}}- \paren{-P_H-v_g} \dnu \ov{\paren{Q_H +v_h}}  \right]\dd s + o(1)
\end{align*}
as $R \to \infty$. Note that we have used the exponential decay of the solutions to the modified Helmholtz equation as in the analysis of the first two terms. With this we see that 
\begin{multline*}
\int_{\partial B_R} \paren{ \dnu \Delta P \ov{Q}-\Delta P \dnu \ov{Q}  }\dd s \\
= k^2\int_{\partial B_R} \left[  \dnu\paren{P_H+v_g} \ov{\paren{-Q_H-v_h}}- \paren{P_H+v_g} \dnu \ov{\paren{-Q_H-v_h}}  \right]\dd s + o(1)
\end{multline*}
as $R \to \infty$. This implies letting $R \to \infty$ we have 
\begin{align*}
\int_{\partial B_R} \paren{ \dnu \Delta P \ov{Q}-\Delta P \dnu \ov{Q}  }\dd s \to 
-\frac{\text{i}k^2}{4\pi}\paren{Fg, Fh}_{L^2(\mathbb{S}^1)}  - k^2\paren{g, Fh}_{L^2(\mathbb{S}^1)} + k^2 \paren{Fg, h}_{L^2(\mathbb{S}^1)}
\end{align*}
i.e. the first two terms and the last two terms have the same limiting value. Combining the two limiting results we have that 
\begin{align*}
&0 =\int_{\partial B_R} \paren{ \dnu P \Delta\ov{Q} - P \dnu\Delta\ov{Q} + \dnu \Delta P \ov{Q} - \Delta P\dnu\ov{Q} }\dd s \\
&\hspace{1.5in}\to  -\frac{\text{i}k^2}{2\pi}\paren{Fg, Fh}_{L^2(\mathbb{S}^1)}  - 2k^2\paren{g, Fh}_{L^2(\mathbb{S}^1)} + 2k^2 \paren{Fg, h}_{L^2(\mathbb{S}^1)},
\end{align*}
as $R \to \infty$. Therefore, we have obtained that 
\begin{align*}
\frac{\text{i}}{4\pi}\paren{Fg, Fh}_{L^2(\mathbb{S}^1)} = \paren{Fg, h}_{L^2(\mathbb{S}^1)}-\paren{g, Fh}_{L^2(\mathbb{S}^1)}
\end{align*}
which completes the proof.
\end{proof}

We are now prepared to derive the resolution analysis for the second imaging function. As we will demonstrate, both of the aforementioned functions can be used to recover the cavity $D$. These imaging functions are numerically straightforward to compute and stable with respect to noisy data. This provides two practical methods for numerically recovering the unknown region $D$ from the measured far-field data.

\begin{theorem}\label{imgfunc2}
The far-field operator $F: L^2(\mathbb{S}^1) \to L^2(\mathbb{S}^1)$, defined by \eqref{ffop} corresponding to the biharmonic clamped scattering problem \eqref{biharmonic}--\eqref{SRC}, satisfies 
$$ \|F\phi_z\|_{L^2(\mathbb{S}^1)} = \mathcal{O}\big(\mathrm{dist}(z, D)^{-1/2}\big),\quad \mathrm{dist}(z, D) \to \infty,$$
where $\phi_z= \mathrm{e}^{-\mathrm{i}kz \cdot d}$ is the far-field pattern of the fundamental solution $\Phi_k (z, {y})$. 
\end{theorem}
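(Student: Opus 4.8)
The plan is to bound $\|F\phi_z\|_{L^2(\mathbb{S}^1)}$ from above by the already-established quantity $\left|(\phi_z, F\phi_z)_{L^2(\mathbb{S}^1)}\right|$, so that the conclusion follows by taking square roots in Theorem \ref{farfieldineq1}. The bridge between the norm and the inner product is exactly the operator identity $F - F^* = \tfrac{\mathrm{i}}{4\pi} F^* F$ proved in Theorem \ref{ffoeq}, which plays the role that the sound-soft relation $F - F^* = \tfrac{\mathrm{i}k}{2\pi} F^* F$ plays in the acoustic factorization method. First I would rewrite this identity applied to $\phi_z$ and pair it against $\phi_z$: taking the inner product of $(F - F^*)\phi_z = \tfrac{\mathrm{i}}{4\pi} F^* F \phi_z$ with $\phi_z$ gives
\begin{align*}
(\phi_z, F\phi_z)_{L^2(\mathbb{S}^1)} - (\phi_z, F^*\phi_z)_{L^2(\mathbb{S}^1)} = \frac{\mathrm{i}}{4\pi}(\phi_z, F^* F\phi_z)_{L^2(\mathbb{S}^1)} = \frac{\mathrm{i}}{4\pi}\|F\phi_z\|^2_{L^2(\mathbb{S}^1)}.
\end{align*}

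The key observation is that $(\phi_z, F^*\phi_z)_{L^2(\mathbb{S}^1)} = \overline{(F\phi_z,\phi_z)_{L^2(\mathbb{S}^1)}} = \overline{(\phi_z, F\phi_z)_{L^2(\mathbb{S}^1)}}$, so the left-hand side is $(\phi_z, F\phi_z) - \overline{(\phi_z, F\phi_z)} = 2\mathrm{i}\,\mathrm{Im}\,(\phi_z, F\phi_z)_{L^2(\mathbb{S}^1)}$. Equating the two expressions and cancelling the factor of $\mathrm{i}$ yields
\begin{align*}
\|F\phi_z\|^2_{L^2(\mathbb{S}^1)} = 8\pi\,\mathrm{Im}\,(\phi_z, F\phi_z)_{L^2(\mathbb{S}^1)} \leq 8\pi\left|(\phi_z, F\phi_z)_{L^2(\mathbb{S}^1)}\right|.
\end{align*}
This is the crucial step: the identity from Theorem \ref{ffoeq} converts the \emph{norm squared} of $F\phi_z$ into the \emph{imaginary part} of a scalar that we have already controlled, and the imaginary part is trivially dominated by the modulus.

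Finally I would invoke Theorem \ref{farfieldineq1}, which states $\left|(\phi_z, F\phi_z)_{L^2(\mathbb{S}^1)}\right| = \mathcal{O}(\mathrm{dist}(z,D)^{-1})$ as $\mathrm{dist}(z,D) \to \infty$. Substituting this into the displayed bound gives $\|F\phi_z\|^2_{L^2(\mathbb{S}^1)} = \mathcal{O}(\mathrm{dist}(z,D)^{-1})$, and taking square roots produces the claimed rate $\|F\phi_z\|_{L^2(\mathbb{S}^1)} = \mathcal{O}(\mathrm{dist}(z,D)^{-1/2})$. I expect the main subtlety to be bookkeeping rather than anything deep: one must make sure the constant $\tfrac{\mathrm{i}}{4\pi}$ and the placement of the conjugate in the definition of the adjoint are tracked correctly, since a sign error there would corrupt the identity $\|F\phi_z\|^2 = 8\pi\,\mathrm{Im}(\phi_z, F\phi_z)$. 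The positivity that makes the final inequality clean is a consequence of the correct sign in Theorem \ref{ffoeq}; conceptually the whole argument is the standard factorization-method trick of reading off the range/decay of $F\phi_z$ from the coercivity-type relation between $F$ and $F^*F$, and the fact that it goes through \emph{verbatim} for the biharmonic clamped problem is precisely the ``same imaging functions work without alteration'' phenomenon the introduction advertises.
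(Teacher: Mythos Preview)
Your proposal is correct and follows essentially the same route as the paper: both arguments use the identity $F-F^*=\tfrac{\mathrm{i}}{4\pi}F^*F$ from Theorem~\ref{ffoeq} to bound $\|F\phi_z\|^2_{L^2(\mathbb{S}^1)}\le 8\pi\,\bigl|(\phi_z,F\phi_z)_{L^2(\mathbb{S}^1)}\bigr|$ and then invoke Theorem~\ref{farfieldineq1}. The paper additionally records the reverse Cauchy--Schwarz bound $\bigl|(\phi_z,F\phi_z)\bigr|\le\sqrt{2\pi}\,\|F\phi_z\|$ to emphasize the equivalence of the two imaging functions, but that is not needed for the stated decay rate.
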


\begin{proof}
First, recall that the imaginary part of the far-field operator is defined as
$$\Im\{F\} = \frac{1}{2\text{i}}(F-F^*).$$
Using the result from Theorem  \ref{ffoeq}, we show that the two imaging functions bound each other from above and below. Specifically, we establish that
$\|F\phi_z\|_{L^2(\mathbb{S})}$ and $ \big| (\phi_z, F\phi_z)_{L^2(\mathbb{S}^1)}\big|$ are related in a sense of equivalence.

Indeed, it is clear that 
\begin{align*}
 \big| (\phi_z, F\phi_z)_{L^2(\mathbb{S}^1)}\big| \leq \|\phi_z\|_{L^2(\mathbb{S}^1)}\|F\phi_z\|_{L^2(\mathbb{S}^1)} = \sqrt{2\pi}\|F\phi_z\|_{L^2(\mathbb{S}^1)}.
\end{align*}
Conversely, we have
\begin{align*}
 \left| (\phi_z, F\phi_z)_{L^2(\mathbb{S}^1)}\right| \geq  \left| \Im\brac{{(\phi_z, F\phi_z)_{L^2(\mathbb{S}^1)}}}\right|  =  \left| (\phi_z, \Im\brac{{F}}\phi_z)_{L^2(\mathbb{S}^1)}\right| =\left|  \frac{1}{2\text{i}}\paren{\phi_z, (F-F^*)\phi_z}_{L^2(\mathbb{S}^1)} \right|.
\end{align*}
From Theorem \ref{ffoeq}, we obtain
\begin{align*}
 \big| (\phi_z, F\phi_z)_{L^2(\mathbb{S}^1)}\big|  \geq \frac{1}{8\pi} \paren{\phi_z, F^*F \phi_z}_{L^2(\mathbb{S}^1)} = \frac{1}{8\pi} \|F\phi_z\|^2_{L^2(\mathbb{S}^1)}.
\end{align*}
This establishes the claim by utilizing the decay result in Theorem \ref{farfieldineq1}.
\end{proof}

From the proof of Theorem \ref{imgfunc2}, it is clear that the two imaging functions under consideration are equivalent in the sense that there exist constants constants $c_1,c_2>0$ such that 
$$c_1 \|F\phi_z\|^2_{L^2(\mathbb{S}^1)} \leq  \left| (\phi_z, F\phi_z)_{L^2(\mathbb{S}^1)}\right| \leq c_2 \|F\phi_z\|_{L^2(\mathbb{S}^1)}. $$ 
Therefore, we expect both functions to yield similar reconstructions of the cavity $D$. The primary difference is that the imaging function defined via the inner product has a faster decay rate compared to the imaging functional defined via the norm. As we will demonstrate in our numerical experiments, both functions provide good reconstructions, and the difference is not detectable in practice, provided that $\rho$  is chosen such that both functions exhibit the same decay rate as $\mathrm{dist}(z, D) \to \infty$.

\section{Numerical Validation}\label{numerics}
In this section, we provide numerical examples to illustrate the effectiveness of the two imaging functions discussed in the previous section. We use synthetically generated far-field data for these examples. To obtain this data, we employ the numerical method described in \cite{DongLi24} to solve the boundary integral equation (BIE)  \eqref{BIE} for the densities $\varphi_1$ and $\varphi_2$ where the integral equations are discretized using the standard method as described in \cite{DongLi24} and \cite{int-equ-book}.

In our examples, we define the matrix
$$\textbf{F}=\Big[u^\infty(\hat{x}_i,d_j)\Big]^{64}_{i,j=1}, $$ 
where
\begin{align*}
 u^\infty (\hat{x},d) =-\text{i}k \int_{\partial D} \nu(y)\cdot \hat{x} \text{e}^{-\text{i}k\hat{x}\cdot y}  \varphi_1(y;d) \, \text{d}s(y). 
\end{align*}
Here, we discretize $\mathbb{S}^1$ by setting    
$$\hat{x}_i=d_i= \big(\cos(\theta_i),\sin(\theta_i)\big)^\top,\quad \theta_i=2\pi(i-1)/64,\quad i=1,\dots,64.$$ 
Thus, the multi-static far-field matrix $\textbf{F} \in \C^{64 \times 64}$ corresponds to $64$ incident and observation directions.

To simulate measurement error in the data, random noise is introduced into the multi-static far-field matrix  $\mathbf{F}$, yielding
$$\mathbf{F}^{\delta}=\Big[ \mathbf{F}(i,j) \big(1+\delta\mathbf{R}(i,j) \big)\Big]^{64}_{i,j=1},$$
where the error matrix $\mathbf{R} \in \C^{64 \times 64}$ consists of random values within the interval $[-1, 1]$, with $0<\delta < 1$ representing the relative noise level added to the computed data. Consequently, the discretized imaging functions are given by
$$W_{\text{ip}}(z)=\big| (\boldsymbol{\phi}_z, \textbf{F}^{\delta}\boldsymbol{\phi}_z )_{\ell^2} \big|^\rho,\quad  W_{\text{norm}}(z)=\big\| \textbf{F}^{\delta} \boldsymbol{\phi}_z \big\|^\rho_{\ell^2},$$
where the vector $\boldsymbol{\phi}_z$ is computed as
$$\boldsymbol{\phi}_z=\big(\text{e}^{-\text{i}k z \cdot d_1},\dots, \text{e}^{-\text{i}k z \cdot d_{64}}\big)^\top,\quad z\in \mathbb{R}^2.$$

Recall that $\rho>0$ is a parameter that can be adjusted to refine the resolution of the reconstructions. According to Theorems \ref{farfieldineq1} and \ref{imgfunc2}, for any $\rho>0$, we have
$$W_{\text{ip}}(z)=  \mathcal{O}\big(\mathrm{dist}(z, D)^{-\rho}\big),\quad  W_{\text{norm}}(z)=\mathcal{O}\big(\mathrm{dist}(z, D)^{-\rho/2}\big)$$
as $\mathrm{dist}(z, D) \to \infty$. As demonstrated in our numerical examples, this indicates that both imaging functions can effectively recover the unknown cavity $D$.

We consider reconstructing two shapes: a peanut-shaped region and a star-shaped region, defined by
$$\partial D=r(\theta)\big(\cos(\theta),\sin(\theta)\big)^\top,\quad  0\leq \theta \leq 2 \pi,$$
where $r(\theta)=1.5 \big(0.5\sqrt{3\cos(\theta)^2+1}\big)$ is for the peanut shape and $r(\theta)=1.5\big(1+0.3\cos(4\theta)\big)$ is for the star shape. 

In these examples, we use a wave number $k=4$. The sampling region is $[-4,4]^2$, and we select 150$\times$150  equally spaced points within this region. We present contour plots of the imaging functions for both scatterers at different noise levels to assess the stability of our method. In all figures, the dotted line represents the boundary $\partial D$ of the cavity. The imaging functions are normalized in the sampling region to have a maximum value of one. We also examine cases with partial-aperture data, where complete access to the far-field pattern over the entire unit circle is not available.

\subsection{Example 1. A star-shaped cavity}

We present numerical reconstructions for the star-shaped cavity using both imaging functions. Figure \ref{recon1} shows the reconstruction obtained with $W_{\text{ip}}(z)$, while Figure \ref{recon2} displays the reconstruction obtained with $W_{\text{norm}}(z)$. We include examples with no error and with a small amount of error added to the data. In these examples, we set $\rho=4$ for both imaging functions.

\begin{figure}[H]
{\includegraphics[width=1\linewidth]{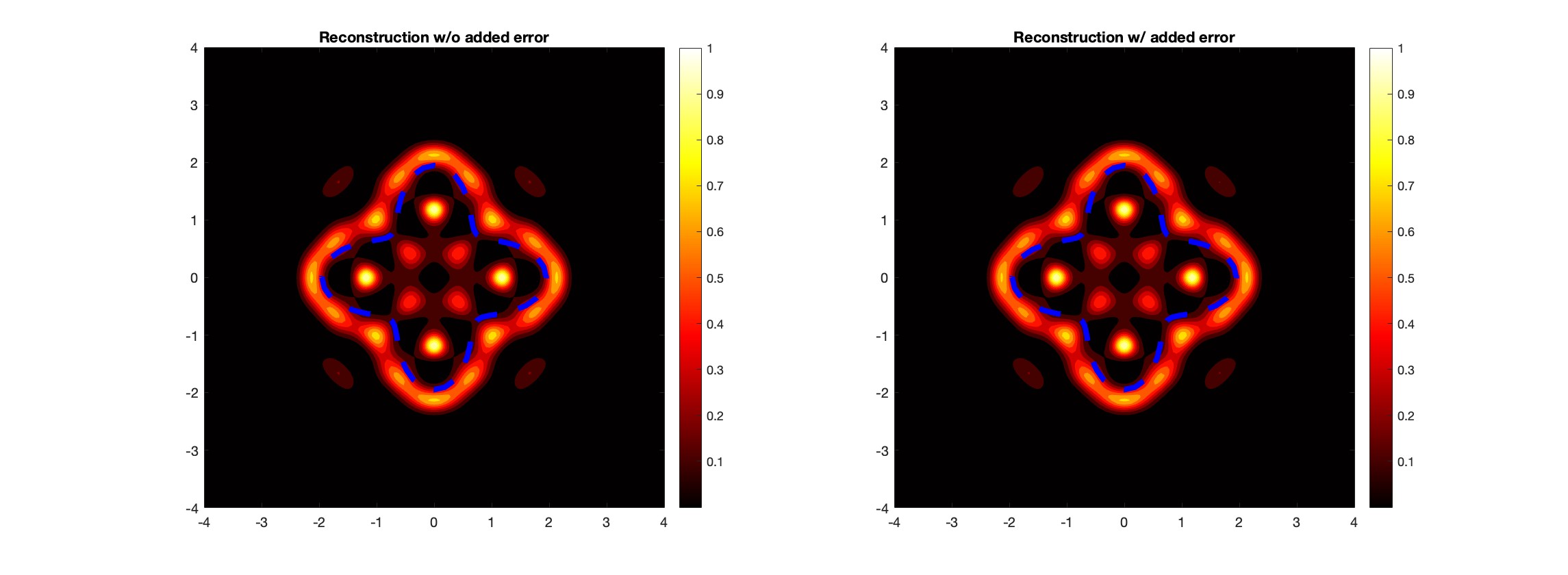}}  
\caption{The reconstruction of the star-shaped cavity by the imaging function $W_{\text{ip}}(z)$. (Left) the reconstruction with no error added to the far-field data; (Right) the reconstruction with $\delta=0.02$, which corresponds to a 2$\%$ noise level.}
\label{recon1} 
\end{figure}

\begin{figure}[H]
{\includegraphics[width=1\linewidth]{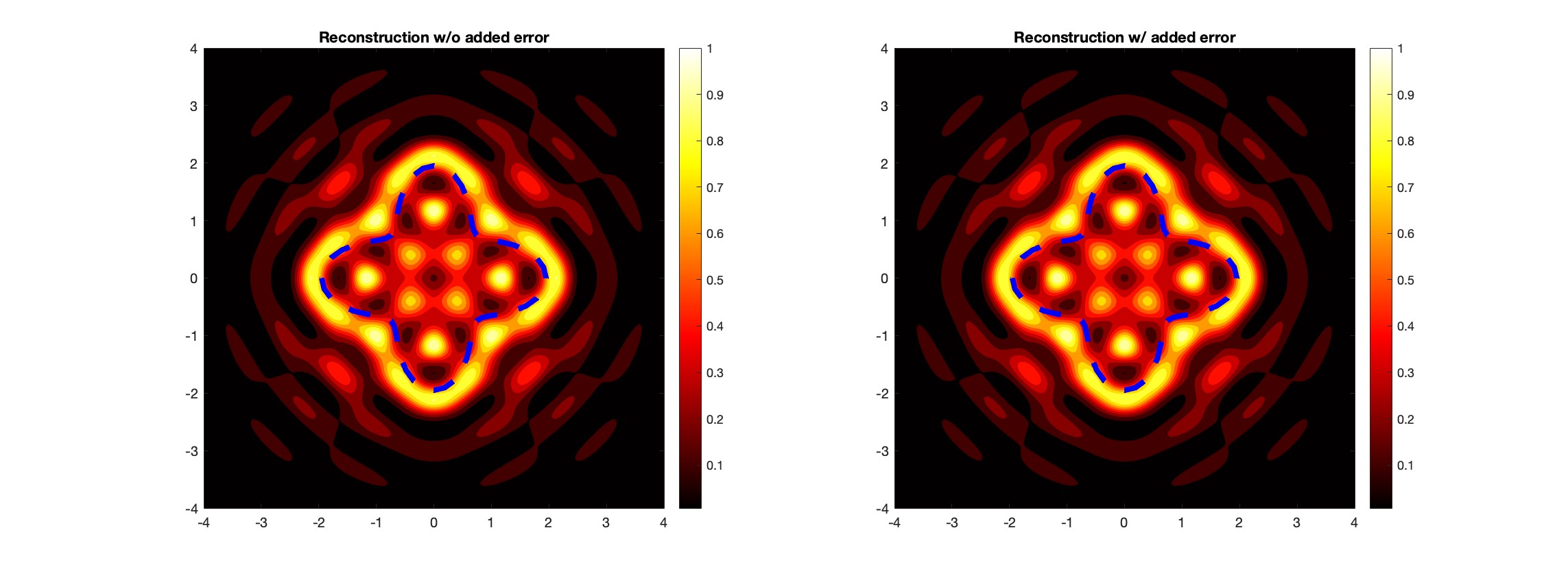}}  
\caption{The reconstruction of the star-shaped cavity by the imaging function $W_{\text{norm}}(z)$. (Left) the reconstruction with no error added to the far-field data; (Right) the reconstruction with $\delta=0.02$, which corresponds to a 2$\%$ noise level.}
\label{recon2} 
\end{figure}

Notice that $W_{\text{ip}}(z)$ appears to provide a sharper reconstruction, but this is primarily due to its faster decay rate. For $\rho=8$ in $W_{\text{norm}}(z)$, the reconstructions would be visually identical. This is illustrated in Figure \ref{recon3}, where no discernible difference is observed between the reconstructions using the two imaging functions with $\delta=0.02$, due to their equivalence.

\begin{figure}[H]
{\includegraphics[width=1\linewidth]{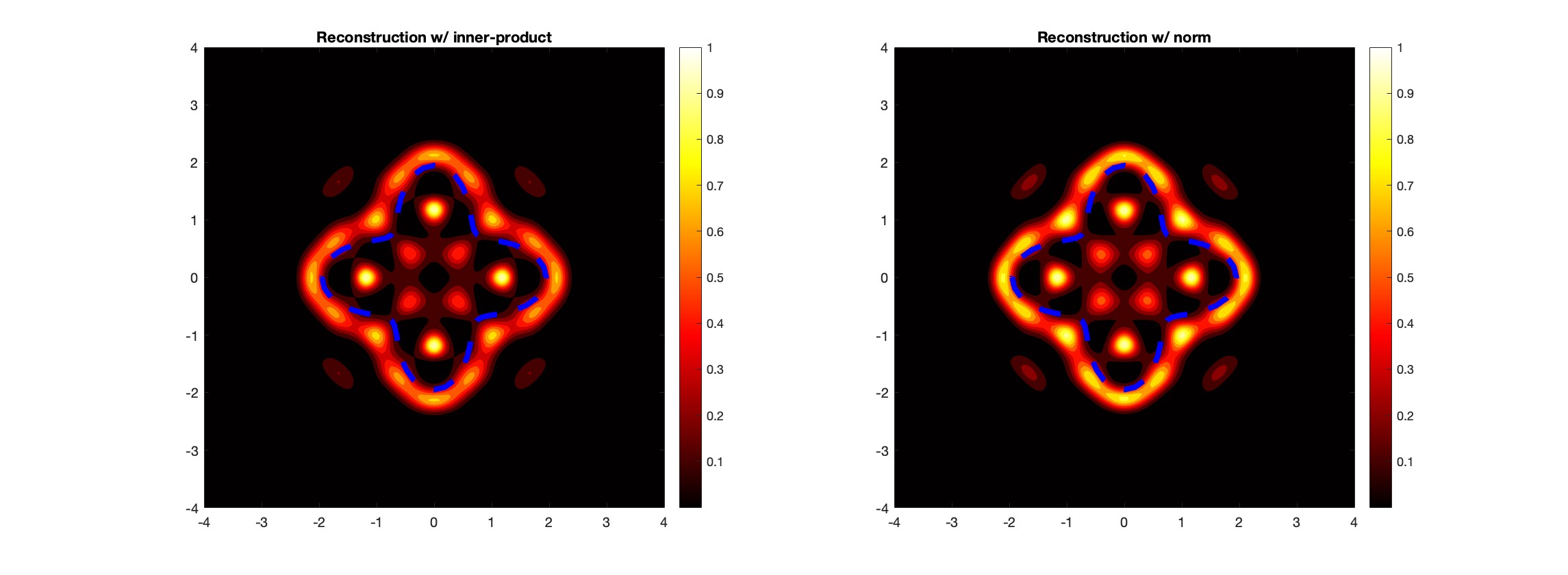}}  
\caption{The reconstruction of the star-shaped cavity. (Left) the reconstruction via $W_{\text{ip}}(z)$ with $\rho=4$; (Right) the reconstruction via $W_{\text{norm}}(z)$ with $\rho=8$.}
\label{recon3} 
\end{figure}

\subsection{Example 2. A peanut-shaped cavity}

We now present another numerical example using the peanut-shaped cavity with both imaging functions. Figure \ref{recon4} shows the reconstruction using $W_{\text{ip}}(z)$ with $\rho=4$ and $W_{\text{norm}}(z)$ with $\rho=8$. For these choices of $\rho$, both imaging functions exhibit the same decay rate, leading to identical reconstructions in the eyeball norm. In this example, $\delta=0.3$ is used, which represents a significant amount of noise in the data. Despite this, both imaging functions produce good reconstructions even with the presence of noisy data.

\begin{figure}[H]
{\includegraphics[width=1\linewidth]{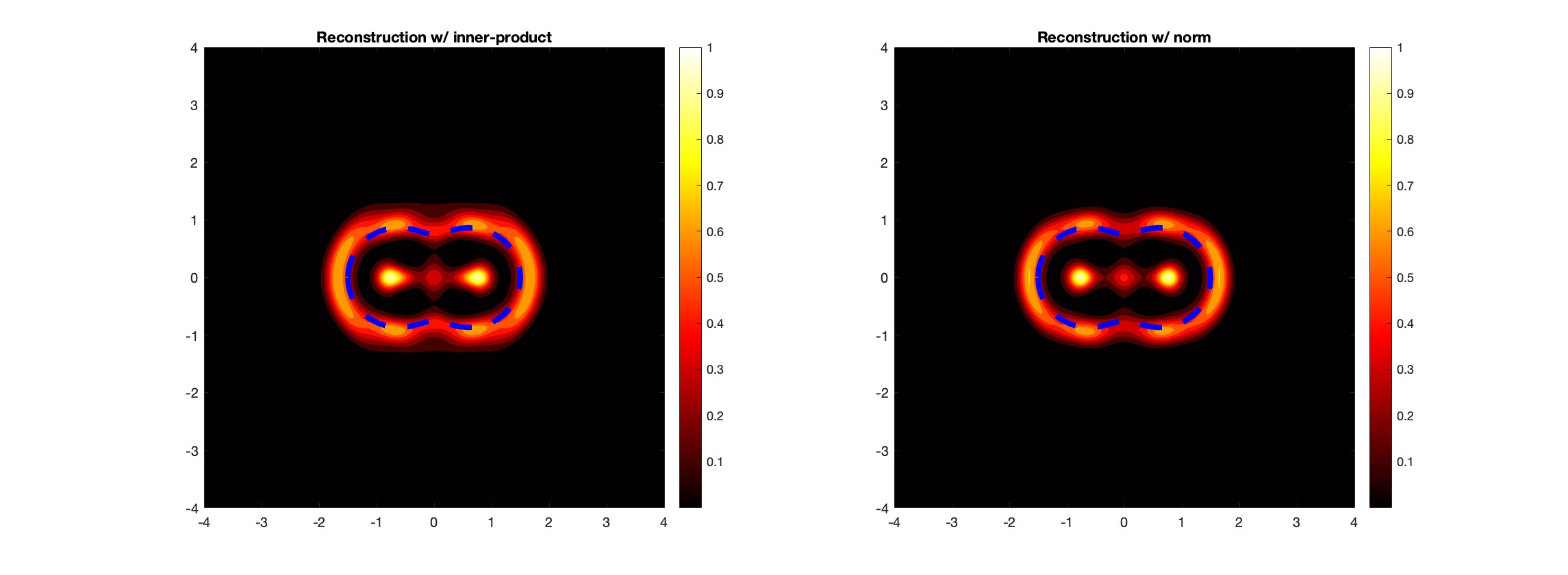}}  
\caption{The reconstruction of the peanut-shaped cavity with $\delta=0.3$, which corresponds to a 30$\%$ noise level. (Left) the reconstruction via $W_{\text{ip}}(z)$; (Right) the reconstruction via $W_{\text{norm}}(z)$.}
\label{recon4} 
\end{figure}

We also aim to assess the sensitivity of the reconstruction with respect to the parameter $\rho>0$. {\color{black}In Figure \ref{reconreviewer}, we present the numerical reconstructions using $W_{\text{ip}}(z)$ for $\rho=1$ and $\rho=1/2$. Then in Figure \ref{recon5}, we present the numerical reconstructions again using $W_{\text{ip}}(z)$ for $\rho=2$ and $\rho=8$.} The results demonstrate that larger values of $\rho$ yield sharper reconstructions of the boundary. Note that we test only one imaging function in this case due to their equivalence.
\begin{figure}[H]
{\includegraphics[width=1\linewidth]{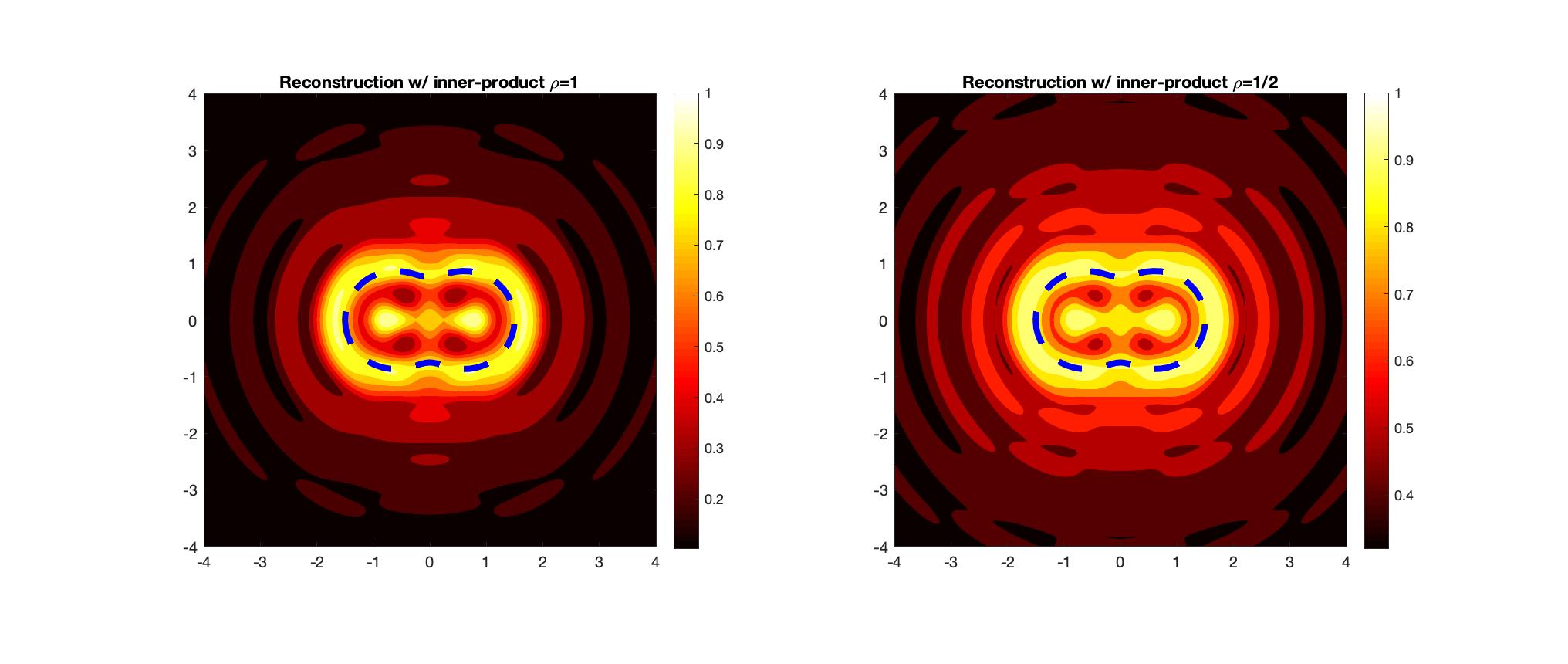}}  
\caption{The reconstruction of the peanut-shaped cavity with $\delta=0.1$, which corresponds to a 10$\%$ noise level via the imaging function $W_{\text{ip}}(z)$. (Left) the reconstruction with $\rho=1$; (Right) the reconstruction with $\rho=1/2$.}
\label{reconreviewer} 
\end{figure}

\begin{figure}[H]
{\includegraphics[width=1\linewidth]{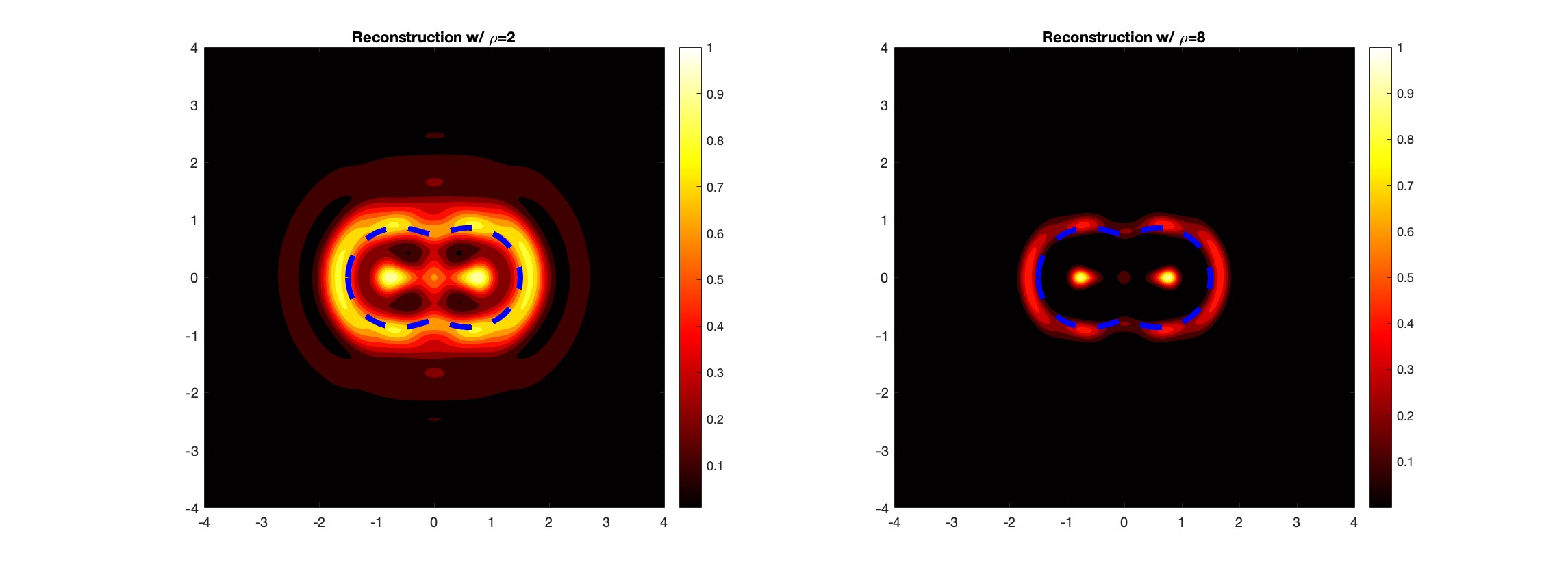}}  
\caption{The reconstruction of the peanut-shaped cavity with $\delta=0.1$, which corresponds to a 10$\%$ noise level via the imaging function $W_{\text{ip}}(z)$. (Left) the reconstruction with $\rho=2$; (Right) the reconstruction with $\rho=8$.}
\label{recon5} 
\end{figure}

\subsection{Example 3. Partial apertures}

We now investigate whether our imaging functions can recover cavities using partial-aperture data. In many applications, it is not always feasible to place sources and receivers around the entire cavity. To model partial aperture data, we set
$$\mathbf{F}^{\delta}(i,:) = \vec{0},\quad i=1, \cdots , 16,  \quad  \mathbf{F}^{\delta}(: \, ,j) = \vec{0},  \quad j=48, \cdots , 64. $$
This configuration simulates the absence of receivers in the first quadrant and sources in the fourth quadrant. In Figures \ref{recon6} and \ref{recon7}, we present the reconstructions of the two scatterers using this partial aperture data set with  $W_{\text{ip}}(z)$ and $\rho=4$. Despite the reduced data coverage compared to full aperture data, the imaging function still provides a good reconstruction of the cavities.

\begin{figure}[H]
{\includegraphics[width=1\linewidth]{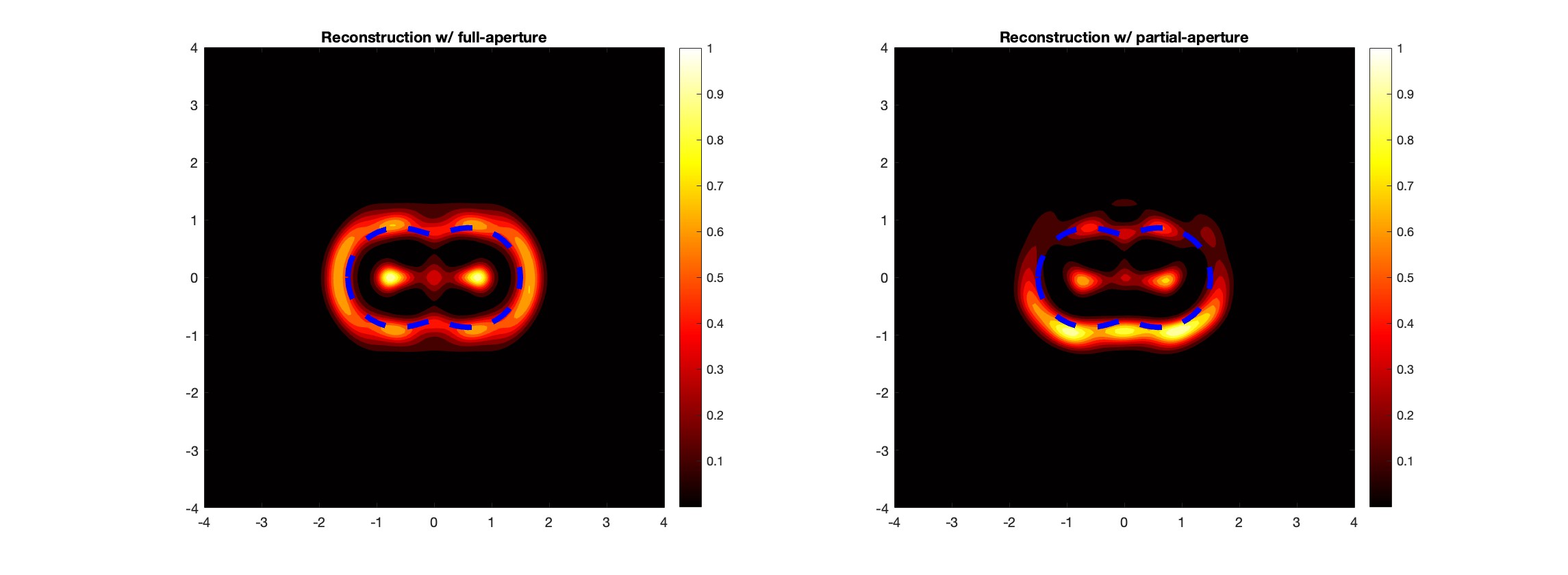}}  
\caption{The reconstruction of the peanut-shaped cavity with $\delta=0.1$, which corresponds to a 10$\%$ noise level via the imaging function $W_{\text{ip}}(z)$. (Left) the reconstruction with full aperture; (Right) the reconstruction with partial aperture.}
\label{recon6} 
\end{figure}

\begin{figure}[H]
{\includegraphics[width=1\linewidth]{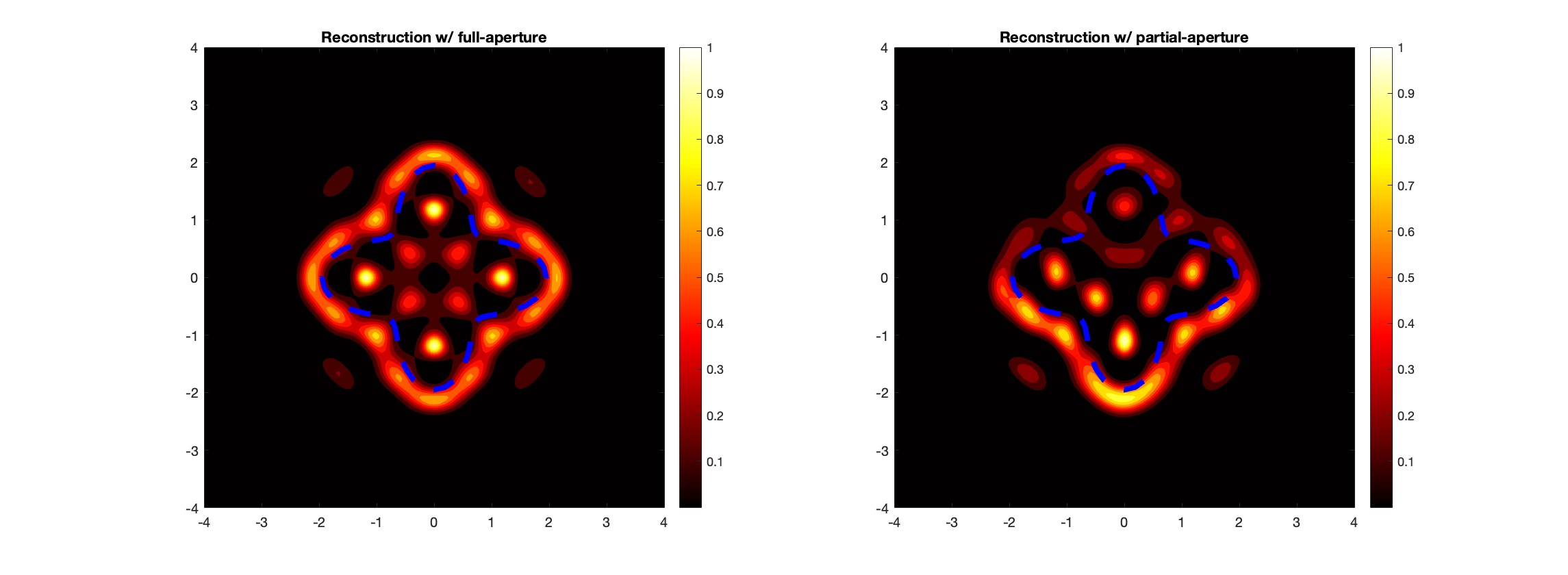}}  
\caption{The reconstruction of the star-shaped cavity with $\delta=0.1$, which corresponds to a 10$\%$ noise level via the imaging function $W_{\text{ip}}(z)$. (Left) the reconstruction with full aperture; (Right) the reconstruction with partial aperture.}
\label{recon7} 
\end{figure}

Recent studies have explored qualitative methods, such as the direct sampling method, for use with partial aperture data. For example, \cite{datacomppaper,LiuSun} investigated data completion algorithms for certain acoustic scattering problems. Additionally, \cite{partap1,partap2} derived an asymptotic formula for their imaging functional when only part of $\mathbb{S}^1$ is integrated, utilizing the Funk--Hecke integral identity. This approach was also applied to a different inverse acoustic scattering problem in \cite{partap3}, where the scatterer is a penetrable isotropic object with two conductivity parameters on its boundary.

\section{Conclusion}\label{end}

We have conducted a resolution analysis for two straightforward direct sampling imaging functions aimed at recovering an unknown clamped cavity from biharmonic far-field data. To date, there has been limited investigation into extending qualitative reconstruction methods to biharmonic scattering problems. For instance, \cite{bih-lsm} studied the linear sampling method using near-field data, which appears to be the only other work addressing an inverse shape problem for biharmonic waves using a qualitative approach. Additionally, we have provided numerical validation of our method. Our experiments demonstrate that the direct sampling imaging functions are stable and effective for handling non-convex cavities.

There are several potential directions for expanding this research. For instance, it would be valuable to investigate the applicability of the direct sampling method to cases involving penetrable cavities. Additionally, exploring the associated transmission eigenvalue problems that arise in the study of the far-field operator could offer further insights and contribute to the advancement of the field.

\noindent{\bf Acknowledgments:} The research of I. Harris and H. Lee is partially supported by the NSF DMS Grants 2107891 and 2208256. 



\end{document}